\def\Z{\mathbb{Z}}
\def\zplus{\mathbb{Z}^+}
\def\th{^\text{th}}
\def\hom{\text{Hom}}
\def\epsilon{\varepsilon}
\def\phi{\varphi}
\def\diag{\text{Diag}}
\DeclareMathOperator{\im}{im}
\DeclareMathOperator{\ind}{Ind}
\DeclareMathOperator{\coker}{coker}
\let\dim\relax
\DeclareMathOperator{\dim}{Dim}
\def\cfm #1{\text{CFM(#1)}}
\def\rcfm #1{\text{B}(#1)}
\newtheorem{thm}{Theorem}[section]
\newtheorem{prop}[thm]{Proposition}
\newtheorem{cor}[thm]{Corollary}
\newtheorem{lemma}[thm]{Lemma}
\theoremstyle{definition}
\newtheorem{ex}[thm]{Example}
\newtheorem{defn}[thm]{Definition}
\newtheorem{rmk}[thm]{Remark}
\newtheorem{question}[thm]{Question}
\theoremstyle{remark}
\newtheorem*{claim}{Claim}
\begin{document}

\title{Ideal Extensions and Directly Infinite Algebras}


\author{Daniel P. Bossaller}
\email{daniel\_bossaller@baylor.edu}
\address{Department of Mathematics\\
Baylor University\\
1 Bear Place\\
Waco, TX, 76706}

\begin{abstract}
Directly infinite algebras, those algebras, $E$ which have a pair of elements $x$ and $y$ where $1 = xy \neq yx$, are well known to have a sub-algebra isomorphic to $M_\infty(K)$, the set of infinite $\zplus \times \zplus$-indexed matrices which have only finitely many nonzero entries. When this sub-algebra is actually an ideal, we may analyze the algebra in terms of an extension of some algebra $A$ by $M_\infty(K)$, that is, a short exact sequence of $K$-algebras  $0 \to M_\infty(K) \to E \to A \to 0$. The present article characterizes all trivial (split) extensions of $K[x,x^{-1}]$ by $M_\infty(K)$ by examining the extensions as sub-algebras of infinite matrix algebras. Furthermore, we construct an infinite family of pairwise non-isomorphic extensions $\{\mathcal T_i : i \geq 0\}$, all of which can be written as an extension $0 \to M_\infty(K) \to \mathcal T_i \to K[x,x^{-1}] \to 0$.
\end{abstract}

\maketitle

\section{Introduction}
In this article we will assume that all $K$-algebras are unital for some field $K$ of zero characteristic, and that homomorphisms are unital.
The Toeplitz-Jacobson algebra $\mathcal T = \langle x, y : xy = 1\rangle$ was first introduced by Jacobson in \cite{onesidedinverses}. There is a natural decomposition of $\mathcal T$ as an (ideal) extension of $K[x,x^{-1}]$ by $M_\infty(K)$, that is, there is a short exact sequence
\[0 \rightarrow M_\infty(K) \rightarrow \mathcal T \rightarrow K[x,x^{-1}] \rightarrow 0,\] where $M_\infty(K)$ is the (non-unital) algebra of $\zplus \times \zplus$-indexed matrices over $K$ and $K[x, x^{-1}]$ is the algebra of Laurent polynomials.

We dub the ideal $M_\infty(K)$ of $\mathcal T$ a ``faithful" ideal, for when $M_\infty(K)$ is considered as a module over $\mathcal T$, it is faithful as a left and as a right $\mathcal T$-module. It will be shown in Lemma \ref{faithful embed} that having $M_\infty(K)$ as a faithful ideal is equivalent to there being an injective homomorphism from $\mathcal T$ into $\rcfm K$, the algebra of countably infinite matrices where each row and each column has only finitely many elements. This analysis extends to the family of algebras where there exist elements $x$ and $y$ such that $xy = 1$ but $yx \neq 1$; such algebras $E$ are called ``directly infinite." Every directly infinite algebra has an infinite set of matrix units which spans a sub-algebra isomorphic to $M_\infty(K)$. For an exploration of the directly infinite property in the context of von Neumann regular rings, see \cite{goodearl}.

Embeddings similar to that presented in Lemma \ref{faithful embed} have been presented in \cite{gmm} and \cite{nielsen}; however, the embedding in this article specifically allows us to explore the family of faithful extensions (short exact sequences of algebras) $0 \to M_\infty(K) \to E \to A \to 0$ of $A$ by $M_\infty(K)$ through comparison with the extension
\[0 \rightarrow M_\infty(K) \rightarrow \rcfm K \rightarrow \rcfm K /M_\infty(K) \rightarrow 0.\]

Performing such an analysis of extensions through comparison is by no means a new idea. The first analysis of algebra extensions was performed by Hochschild in 1947. In \cite{hochschild}, he embedded extensions within the ``algebra of multiples" of a possibly non-unital algebra $A$, $M(A)$. This multiplier algebra is the smallest algebra which contains $A$ as a faithful ideal. In this construction note that $M(A)$ is only non-trivial when $A$ is non-unital. The author then used techniques of cohomology to put certain equivalence classes of extensions in one to one correspondence with homomorphisms from $C$ into $M(A)/A$. A similar study of these extensions was made by Petrich \cite{petrich} in 1985 which depended on work by Everett from 1942, \cite{everett}. More recent explorations of extensions have been performed by Dorsey and Mesyan in \cite{dorseymesyan} and \cite{IIE}. The present article takes a more constructive view of this problem with an eye towards constructing extensions of $A$ by $M_\infty(K)$.

To that end, Section \ref{pullbacks} is devoted to two perspectives on this problem of comparing the short exact sequence $0 \rightarrow M_\infty \rightarrow E \rightarrow A \rightarrow 0$, with the short exact sequence $0 \rightarrow M_\infty(K) \rightarrow \rcfm K \rightarrow \rcfm K/M_\infty(K) \rightarrow 0$. The first uses embedding of $E$ into $\rcfm K$, to construct a homomorphism $\psi: A \rightarrow \rcfm K/ M_\infty(K)$. The second construction works backward from the homomorphism $\psi$, and uses the pullback in the category of $K$-algebras to make the comparison. This homomorphism $\psi$ from both constructions will be called the ``invariant" of the extension. The techniques from Sections \ref{pullbacks} and \ref{equivalence} are inspired by the work of Busby \cite{busby} who investigated this problem in the the context of $C^*$-algebras. This line of inquiry led to a rich theory of extensions of $C^*$-algebras, most notably in the classification of essentially normal operators by Brown, Douglas, and Fillmore. For a discussion of this classification and some of its consequences, see \cite{davidson}. 

Taking a cue from Busby's work, Section \ref{equivalence} connects an arbitrary extension of $A$ by $M_\infty(K)$ with a homomorphism $\psi: A \rightarrow \rcfm K/M_\infty(K)$, which we call the ``invariant" of the extension. This relationship between the extension and $\psi$ is an equivalence relation. However, this equivalence is too fine in that it separates otherwise isomorphic algebras. To rectify this, a more appropriate notion of equivalence is defined. Theorems \ref{c1 c2 equivalent} and \ref{equivalentinvariants} specifically give characterizations of these two notions of equivalence in terms of this homomorphism $\psi$.

Section \ref{trivial} gives a complete classification of all trivial (split) extensions of $K[x,x^{-1}]$ by $M_\infty(K)$ in terms of the kernel and cokernel of the images of the generators of $K[x,x^{-1}]$ under the invariant (Theorem \ref{trivial extensions}). In order to do this we introduce a family of infinite matrices which are invertible in $\rcfm K/M_\infty(K)$ but not necessarily in $\rcfm K$ and introduce the ``index" of such a matrix. The final section, Section \ref{menagerie} uses the pullback construction of extensions to construct an infinite family of non-isomorphic algebras, each of which has an ideal $M_\infty(K)$ and quotient $K[x,x^{-1}]$.

\section{Background}
In this section, we will formally define and discuss results which will be essential in the remainder of the article. First we define the family of directly infinite algebras, which contains the Toeplitz-Jacobson algebra, $\mathcal T$.

\begin{defn}
An algebra $E$ is called {\it directly infinite} if there exist two elements $x$ and $y$ in $E$ such that $xy = 1$ but $yx \neq 1$.
\end{defn}

Within any directly infinite algebra $E$, there is an infinite set of elements $E_{ij}$ (for $i,j \in \zplus$) with the property that $E_{ij} E_{kl} = \delta_{jk} E_{il}$. We can construct such an set as follows: Since there exist elements $x$ and $y$ in $A$ such that $xy = 1$ but $yx \neq 1$, define $M_E = \{E_{ij} : i,j \in \zplus\}$ by
\[E_{ij} := y^{i-1}(1-yx)x^{j-1}.\]
The linear span of this set $M_E$ is isomorphic to $M_\infty(K)$, the set of infinite $\zplus \times \zplus$ matrices which have only finitely many nonzero elements. When we speak about the matrix units in the family $M_E$, we will use a capital letters, i.e. $E_{ij}$; however, we will denote the matrix units of $M_\infty(K)$ by $e_{ij}$.

The techniques introduced in this article depend on the identification between $M_E$ and $M_\infty(K)$ as well as other infinite matrix algebras. We define those matrix algebras here and note the connection between the matrix algebras and certain endomorphism rings of and infinite dimensional vector space $V$. First, fix a countably-infinite dimensional vector space $V$ with basis $\mathcal B = \{b_i : i \in \zplus\}$. Then $\text{End}(V)$ will denote the $K$-algebra of $K$-linear endomorphisms of $V$. By examining the action of $f \in \text{End}(V)$ on the basis elements, there is a natural identification between $\text{End}(V)$ and $\cfm K$, the $K$-algebra of $\zplus \times \zplus$ matrices in which every column has only finitely many nonzero elements. Of more interest to the present article is the sub-algebra of row and column finite matrices, $\rcfm K$, which has finitely many nonzero entries in each row and column.
To find the endomorphism analogue of $\rcfm K$, recall that $V$ may be written $V = \bigoplus_{n \in \zplus} Kb_i$. Denote by $V_n = \bigoplus_{i = n}^\infty Kb_i$. Then the algebra of row and column finite matrices, $\rcfm K$ is $K$-algebra isomorphic to 
\[\rcfm V = \{f \in \text{End}(V) : \forall n \in \zplus, \exists m \in \zplus \text{ with } f(V_m) \subseteq V_n.\}\]
The set of all $f \in \rcfm V$ such that the image $\im(f)$ is a  finite-dimensional subspace of $V$ will be denoted by $M_\infty(V)$, and this may naturally be associated with the set of infinite matrices $M_\infty(K)$ which have only finitely many nonzero entries.

\begin{rmk}
There are two notational conventions for the algebra of row-and-column finite matrices. Recent articles, see for example \cite{infinitematrices}, have used ``$\text{RCFM}(K)$" to denote this algebra; articles which emphasize the algebraic properties of these matrix algebras tend to use this notation. The second convention can be found in \cite{rcfmexchange} and uses the notation ``$\rcfm K$." This is employed to highlight the similarities between $\rcfm K$ and $\mathcal B(\mathcal H)$, the $C^*$-algebra of bounded linear operators on a (complex) Hilbert space $\mathcal H$. As many of the results in this article are inspired by the similarity between these two structures, we have chosen to follow the latter convention.
\end{rmk}

It is straightforward to see that $M_\infty(K)$ is an ideal of $\rcfm K$. Of note is the fact that this ideal is ``large" within $\rcfm K$ in the following way:
\begin{defn}
	Let $I$ be a two sided ideal of $E$, then $I$ is called {\it faithful} if $aI = \{0\}$ implies that $a = 0$ and $Ib = \{0\}$ implies that $b = 0$. In other words, $I$ is faithful whenever it is faithful as a left $E$-module and as a right $E$-module.
\end{defn}
It should be noted that faithful ideals are similar to, but distinct from, the ``essential ideals" in the theory of $C^*$-algebras.  If $I$ is faithful, then for any ideal $J \subseteq E$, if $I \cap J = \{0\}$ then $J = 0$, thus any faithful ideal is essential. In the theory of $C^*$-algebras, faithful ideals and the set of essential ideals coincide. However, in general, the set of essential ideals properly contains the set of faithful ideals, as can be seen in the algebra $K[x]/(x^2)$ which has an essential, but non-faithful, ideal $(x)$.

Let us return to the Toeplitz-Jacobson algebra. First, as a directly infinite algebra, it has an infinite set of matrix units $M_{\mathcal T}$ which span an ideal isomorphic to $M_\infty(K)$. Furthermore, it can easily be seen that $M_\infty(K)$ is a faithful ideal of $\mathcal T$ by examining the action of $M_\infty(K)$ on the generators of $\mathcal T$.

In \cite{onesidedinverses}, Jacobson gave an embedding of $\mathcal T$ into $\rcfm K$ via shift matrices.  For each integer $i$, $S_i$ will denote the matrix \[S_i = \sum_{j = 1}^\infty e_{i+j,j} \text{ for all } j \text{ such that } j>-i,\] where $e_{ij}$ is the standard set of matrix units. Jacobson defines the embedding: $x \mapsto S_{-1}$ and $y \mapsto S_1$. As a consequence of the definition of $M_{\mathcal T}$, this mapping sends $E_{ij}$ to $e_{ij}$ for all $E_{ij} \in M_{\mathcal T}$. Furthermore, from this embedding it can be seen that the Toeplitz-Jacobson algebra may be written as a short-exact sequence
\[\begin{tikzcd}
	0 \arrow{r} &M_\infty(K) \arrow{r}{i} &\mathcal T \arrow{r}{\pi} &K[x,x^{-1}] \arrow{r} &0.
\end{tikzcd}\]
where $i$ is the natural embedding of $M_\infty(K)$ as an ideal of $\mathcal T$ and $\pi$ is the natural surjection $\mathcal T \to \mathcal T / M_\infty(K)$.
In this article we will call such a short exact sequence an extension of $K[x,x^{-1}]$ by $M_\infty(K)$

\begin{defn}
	Given two algebras $A$ and $C$, we will call the triple $E = (B, \phi, \psi)$ an 
	{\it extension} of $C$ by $A$ when $B$ is an algebra and $\phi: A \to B$ and $\psi: B \to C$ are algebra homomorphisms such that 
	\[\begin{tikzcd}
		0 \arrow{r} &A \arrow{r}{\phi} &B \arrow{r}{\psi} &C \arrow{r} &0 \end{tikzcd}\] is a short-exact sequence.
	When it is assumed that $A$ is a faithful ideal of the algebra $B$, we will call such an extension a {\it faithful} extension.
\end{defn}

To prevent unnecessary notation, when the maps $\phi$ and $\psi$ are understood we will merely state that $B$ is an ``extension" of $C$ by $A$.

The fact that $M_\infty(K)$ is a faithful ideal of $\rcfm K$ is fundamental to the following result by Courtemanche and Dugas, which will be used in Section \ref{trivial}. For simplicity, in this article we denote an inner automorphism $\tau^{-1} v \tau$ by $\widehat \tau (v)$.
\begin{prop}{(\cite{automorphismsoffreemodules}, Lemma 2.2)}
	Let $E$ be a subalgebra of $\text{End}(V)$ such that $E$ contains, as a sub-algebra, the set of finite rank endomorphisms of the vector space $V$, i.e. $\text{Fin}(V) \subseteq E$. If $\alpha$ is an automorphism of $E$, then there exists some invertible $\tau \in \text{End}(V)$ such that $\alpha(v) = \widehat \tau(v)$ for all $v \in V$.
\end{prop}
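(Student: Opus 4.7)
The plan is to reduce the problem to showing that $\alpha$ restricts to an inner automorphism of $\text{Fin}(V)$, and then to bootstrap to all of $E$ using the fact that $\text{Fin}(V)$ is a faithful ideal of $E$. This is the natural extension to the non-unital, infinite-dimensional setting of the classical fact that every automorphism of $M_n(K)$ is inner.

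First I would verify that $\alpha(\text{Fin}(V)) = \text{Fin}(V)$ by supplying an internal algebraic characterization of the rank-one idempotents of $E$. The claim is that a nonzero idempotent $e \in E$ has rank one if and only if $eEe = Ke$. Here $(\Rightarrow)$ follows because $e\,\text{End}(V)\,e = Ke$ whenever $e$ has rank one; for $(\Leftarrow)$, if $e$ had rank at least two then $eV$ would contain linearly independent vectors $u_1, u_2$, and one could construct rank-one idempotents $p_1, p_2 \in \text{Fin}(V) \subseteq E$ with $p_i V = Ku_i$ and $e p_i e = p_i$, giving linearly independent elements of $eEe$ and a contradiction. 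As this characterization is visibly preserved by $\alpha$, rank-one idempotents are sent to rank-one idempotents; and because $\text{Fin}(V)$ is the two-sided ideal of $E$ generated by any single rank-one idempotent, it follows that $\alpha(\text{Fin}(V)) = \text{Fin}(V)$.

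Next I would construct $\tau$ via a module-theoretic uniqueness argument. Evaluation makes $V$ into a simple, faithful left $E$-module whose restriction to $\text{Fin}(V)$ is the unique (up to isomorphism) simple non-degenerate $\text{Fin}(V)$-module, realized concretely as the minimal left ideal $\text{Fin}(V) \cdot e_{11}$. Twisting the action by $\alpha^{-1}$ yields another $E$-module $V^{\alpha}$, with action $g \cdot_\alpha v := \alpha^{-1}(g)(v)$; its restriction to $\text{Fin}(V)$ is again simple and non-degenerate by the first step. Uniqueness therefore supplies a $K$-linear bijection $\tau : V \to V$ intertwining the two actions, i.e., $\tau(gv) = \alpha^{-1}(g)\,\tau(v)$ for every $g \in \text{Fin}(V)$. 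Unwinding this intertwining gives $\alpha(g) = \tau^{-1} g \tau = \widehat\tau(g)$ for every $g \in \text{Fin}(V)$, with $\tau$ manifestly invertible inside $\text{End}(V)$.

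Finally I would promote the identity from $\text{Fin}(V)$ to all of $E$. For any $g \in E$ and $f \in \text{Fin}(V)$, the product $gf$ still lies in $\text{Fin}(V)$, so applying $\alpha$ and invoking the previous step produces $\alpha(g)\,(\tau^{-1} f \tau) = \tau^{-1} gf \tau$; rearranging yields $(\tau \alpha(g) \tau^{-1} - g)\,f = 0$ for all $f \in \text{Fin}(V)$. Faithfulness of $\text{Fin}(V)$ as a left ideal of $E$ (every vector of $V$ can be reached by a rank-one operator) forces the prefactor to vanish, so $\alpha(g) = \widehat\tau(g)$ for every $g \in E$. The main obstacle I anticipate is the module uniqueness step: one must work with non-degenerate modules in the non-unital setting and verify that the intertwiner $\tau$ is a genuine $K$-linear bijection of $V$, so that it becomes an honest invertible element of $\text{End}(V)$ rather than just an abstract module isomorphism.
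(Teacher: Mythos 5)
Your proof is correct and follows essentially the route the paper itself only alludes to: the statement is quoted from Courtemanche--Dugas (Lemma 2.2) without proof, the paper remarking only that ``the sub-algebra generated by the set of matrix units is invariant under the automorphism,'' which is exactly your first step (the internal characterization $eEe = Ke$ of rank-one idempotents). Your remaining two steps --- producing $\tau$ from the uniqueness of the simple non-degenerate $\text{Fin}(V)$-module, and extending $\alpha = \widehat\tau$ from $\text{Fin}(V)$ to all of $E$ via faithfulness of the ideal --- are sound.
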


Translated into the language of infinite matrices which we use in this article, if there is a matrix sub-algebra $E$ of $\cfm K$ which contains the set of all matrices $(a_{ij})_{i,j \in \zplus}$ for which there exists some $n \in \zplus$ such that $a_{ij} = 0$ for all $i > n$ (denote this set by $\text{Fin}(K)$), then every automorphism of $E$ is inner. These matrices are sometimes called ``bounded column-finite" in the literature. The proof depends on the fact that the sub-algebra generated by the set of matrix units is invariant under the automorphism. Hence the result may be rephrased as
\begin{prop}\label{automorphism is inner}
	Let $\alpha$ be an algebra automorphism of $M_\infty(K)$, then there exists some invertible column finite matrix $T$ such that 
	\[\alpha(a) = \widehat T(a) \text{ for all } a \in M_\infty(K).\]
\end{prop}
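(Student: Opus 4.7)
The plan is to construct $T$ as the change-of-basis matrix from an $\alpha$-adapted basis of $V$ to the standard basis $\mathcal B = \{b_i\}$. Setting $f_{ij} = \alpha(e_{ij})$, the matrix-unit relations $f_{ij}f_{k\ell} = \delta_{jk} f_{i\ell}$ are preserved. Since $e_{11} M_\infty(K) e_{11} = K e_{11}$ is a one-dimensional corner, applying $\alpha$ gives $f_{11} M_\infty(K) f_{11} = K f_{11}$, which forces $f_{11}$ to act on $V$ as a rank-one idempotent. I would pick a nonzero vector $v_1 \in f_{11}(V)$ and define $v_i = f_{i1}(v_1)$ for $i \geq 2$. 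The matrix-unit relations then yield $f_{ij}(v_k) = \delta_{jk} v_i$, and since $f_{1i}(v_i) = v_1 \neq 0$, each $v_i$ is nonzero; applying the $f_{jj}$ to a putative dependence shows $\{v_i\}$ is linearly independent.

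The main obstacle is showing that $\{v_i\}$ actually spans $V$ (a priori it could be a proper subspace). The natural way around this is to observe that $V' := \operatorname{span}\{v_i\}$ is closed under every $f_{ij}$ by the identity $f_{ij}(v_k) = \delta_{jk} v_i$, and hence under every element of $M_\infty(K)$: since $\alpha$ is surjective and $\{e_{ij}\}$ is a $K$-basis of $M_\infty(K)$, the $\{f_{ij}\}$ are a $K$-basis as well. Thus $V'$ is a nonzero sub-$M_\infty(K)$-module of $V$. But $V$ is simple as a left $M_\infty(K)$-module: any nonzero submodule contains some vector $v = \sum c_k b_k$ with $c_r \neq 0$, and then applying $e_{jr}$ produces $c_r b_j$ for every $j \in \zplus$, so the submodule must be all of $V$. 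We conclude $V' = V$, i.e. $\{v_i\}$ is a basis of $V$.

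Finally, I would define the linear bijection $T \in \text{End}(V)$ by $T(v_k) = b_k$. The inverse $T^{-1}$ is column finite because $v_k = f_{k1}(v_1)$ lies in the finite-dimensional image of $f_{k1} \in M_\infty(K)$, so each column $T^{-1}(b_k) = v_k$ is finitely supported; and $T$ itself is column finite because expanding each standard basis vector $b_j$ in the basis $\{v_k\}$ gives a finite sum whose coefficients form the $j$th column of $T$. A direct check on basis vectors gives $T f_{ij}(v_k) = T(\delta_{jk} v_i) = \delta_{jk} b_i = e_{ij}(b_k) = e_{ij} T(v_k)$, so $T f_{ij} = e_{ij} T$, which rearranges to $\alpha(e_{ij}) = T^{-1} e_{ij} T = \widehat T(e_{ij})$. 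Extending $K$-linearly over all of $M_\infty(K)$ completes the proof.
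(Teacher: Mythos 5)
Your argument is correct, and it supplies in full the proof that the paper only gestures at: the paper does not prove this proposition directly but cites Lemma 2.2 of Courtemanche--Dugas (stated for subalgebras of $\text{End}(V)$ containing $\text{Fin}(V)$) and asserts that the same technique transfers to $M_\infty(K)$. Your write-up is exactly the ``matrix units determine a basis'' argument underlying that transfer --- extract a rank-one idempotent $f_{11}$ from the one-dimensional corner, generate vectors $v_i = f_{i1}(v_1)$, use simplicity of $V$ as a left $M_\infty(K)$-module to upgrade linear independence to spanning, and take $T$ to be the change of basis --- and it is a genuinely useful self-contained substitute, since $M_\infty(K)$ does not literally contain $\text{Fin}(V)$ and so the cited lemma does not apply verbatim. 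One step deserves an extra line: that $f_{11}M_\infty(K)f_{11} = Kf_{11}$ forces $f_{11}$ to have rank one is cleanest to see by choosing $N$ so that all nonzero entries of $f_{11}$ lie in the upper-left $N\times N$ block, whence the corner equals $f_{11}M_N(K)f_{11} \simeq M_r(K)$ for $r$ the rank of $f_{11}$, and $r^2 = 1$ gives $r=1$. Note also that the paper records one further fact after the proposition --- comparing $TM_\infty(K)$ with $M_\infty(K)T$ shows $T$ is in fact row \emph{and} column finite --- which is not needed for the statement as written but is used later; your $T$ satisfies the stated conclusion, and that refinement can be appended to any witness $T$ by the paper's remark.
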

\noindent Finally a simple argument comparing $T M_\infty(K)$ and $M_\infty(K) T$ shows that $T$ must be row and column finite.

\section{Extensions of Algebras}\label{pullbacks}
Let us take a cue from the Toeplitz-Jacobson algebra and give a method for embedding any directly infinite algebra with faithful ideal $M_\infty(K)$ into $\rcfm K$.
\begin{lemma}\label{faithful embed}
	Suppose that the algebra $E$ has an ideal $M_E$ which is isomorphic to $M_\infty(K)$, then there exists a unique homomorphism $\phi: E \rightarrow \rcfm K$ which makes the following diagram commute.
	\[\begin{tikzcd}
		&\rcfm K\\
		M_E \arrow{ur}{\iota} \arrow{r}[swap]{i} &E \arrow[dashed]{u}[swap]{\phi}
	\end{tikzcd}\]
	where $\iota$ is the natural embedding of $M_E$ into $\rcfm K$ and $i$ is the embedding of $M_E$ as an ideal of $E$. Furthermore $\phi$ is injective if, and only if, $M_E$ is a faithful ideal of $E$.
\end{lemma}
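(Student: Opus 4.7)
The plan is to construct $\phi$ by describing its image on matrix units, and then verify the universal property. For any $a \in E$, the two-sided ideal property of $M_E$ makes every product $aE_{ij}$ land in $M_E \cong M_\infty(K)$, hence a finitely supported matrix. Using the factorization $E_{ij} = E_{i1} E_{1j}$, this product decomposes as $aE_{ij} = \sum_k \alpha_{ki}(a) E_{kj}$, with coefficients depending on $a$ and $i$ but not on $j$. I would then take $\phi(a) \in \cfm K$ to be the infinite matrix whose $(k,i)$-entry is $\alpha_{ki}(a)$.

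First I would check $\phi(a) \in \rcfm K$. Column-finiteness is immediate, since column $i$ of $\phi(a)$ records the finitely many nonzero coefficients of $aE_{i1} \in M_E$. For row-finiteness I would examine $E_{kk} a \in M_E$, which is a finitely supported matrix supported in row $k$: the identity $E_{kk} a E_{ij} = \alpha_{ki}(a) E_{kj}$ shows that the $i$-th entry of row $k$ of $E_{kk} a$ equals $\alpha_{ki}(a)$, so this entry vanishes for all but finitely many $i$. This is the main obstacle in the sense that it is the only step where the two-sided, rather than merely one-sided, ideal property of $M_E$ is genuinely used.

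Next I would verify that $\phi$ is a unital algebra homomorphism. Additivity and the compatibility $\phi(a) \cdot \iota(m) = \iota(am)$ for $m \in M_E$ follow directly from the definition on matrix units and linear extension. Multiplicativity then reduces to $\phi(ab) E_{ij} = a(bE_{ij}) = \phi(a)\phi(b) E_{ij}$, while $\phi(1) E_{ij} = E_{ij}$ forces $\phi(1)$ to be the identity matrix. Uniqueness is automatic: any $\phi'$ making the diagram commute satisfies $\phi'(a) E_{ij} = aE_{ij} = \phi(a) E_{ij}$ for all $i,j$, and a matrix in $\rcfm K$ is uniquely determined by its products with the family of matrix units.

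For the injectivity criterion: if $M_E$ is faithful and $\phi(a) = 0$, then $aE_{ij} = 0$ for every $i,j$, hence $aM_E = 0$, so $a = 0$; the analogous argument handles the right side. Conversely, if $\phi$ is injective and $aM_E = 0$, then $\phi(a)$ annihilates every matrix unit of $M_\infty(K)$ inside $\rcfm K$, so $\phi(a) = 0$ and thus $a = 0$; the symmetric computation using $E_{ij} a$ in place of $aE_{ij}$ handles $M_E a = 0$.
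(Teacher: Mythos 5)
Your proof is correct and is essentially the paper's argument in different packaging: your coefficient $\alpha_{ki}(a)$, defined by $aE_{ij} = \sum_k \alpha_{ki}(a)E_{kj}$, is exactly the scalar with $E_{kk}aE_{ii} = \alpha_{ki}(a)E_{ki}$, which is the paper's direct definition of the $(k,i)$ entry of $\phi(a)$ via the Peirce-type decomposition, and the uniqueness and faithfulness arguments coincide. Your verification of row- and column-finiteness (via the finite support of $E_{kk}a$ and $aE_{i1}$ in $M_E$) is in fact more explicit than the paper's, which only appeals to the orthogonal-idempotent decomposition.
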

\begin{proof}
	Define $\phi$ by $\phi(a) = (a_{ij})$ where $(a_{ij})$ is the matrix whose entries are defined $a_{ij} := E_{ii}aE_{jj}$ (note also that $a_{ij} \in K$). Because any decomposition of an algebra by a complete set of orthogonal idempotents is a direct sum, it is simple to show that $(a_{ij}) \in \rcfm K$ for any $a \in E$. Commutativity of the diagram is similarly straightforward. So all that remains to show is uniqueness. Suppose that there exists some $\psi$ which also completes the commutative diagram. Consider for all choices of $i, j \in \zplus$, the $(i,j)\th$ entry of $\psi(a)$. Then $e_{ii} \psi(a) e_{jj} = \psi(E_{ii}) \psi(a) \psi(E_{jj}) = \psi(E_{ii}aE_{jj}) = E_{ii} a E_{jj}$. Because the $(i,j)$ entry of $\psi$ is equal to the $(i,j)$ entry of $\phi$, it follows that $\psi = \phi$. This completes the proof of the first statement.
	
	Now suppose that $\phi$ is injective, but that $M_E$ is not a faithful ideal. Then there is some nonzero $a \in E$ such that $aM_A = 0$ or $M_Aa = 0$. In this case $\phi(a) = (E_{ii} a E_{jj}) = 0$, contradicting the injectivity of $\phi$. Now suppose that $M_E$ is faithful. If $\phi(a)$ were not injective, then there would be some nonzero $a \in E$ such that $E_{ii} a E_{jj} = 0$ for all $i,j \in \zplus$. In particular this means that $E_{ii} a = 0$ and $a E_{jj} = 0$. By noting that $E_{ij} = E_{ii} E_{ij} E_{jj}$, one has that $a E_{ij} = E_{ij} a = 0$. Thus $a$ is a nonzero element of $E$ such that $a M_A = 0$ and $M_E a = 0$, contradicting that $M_E$ is faithful.
\end{proof}

\begin{rmk}
	Two things should be mentioned about this result. First, in the case of a faithful ideal $M_E$ of $E$, $\phi$ is completely determined by the embedding $\iota: M_E \rightarrow \rcfm K$ due to the construction of $(a_{ij})$. Second the assumption of a faithful $M_E$ is necessary. One need only consider $E = B \oplus M_\infty(K)$, the co-product of $B$ with $M_\infty(K)$ for some $K$-algebra $B$. The map $\phi$ defined above merely becomes a projection onto the second coordinate, which is not injective. 
\end{rmk}

While many of the following results do not require the assumption that $M_E$ be a faithful ideal of $E$ (most notably Theorem \ref{c1 c2 equivalent}), we will, nevertheless make that assumption throughout this article. This assumption admits two simplifications, first we may think about $E$ as a matrix subalgebra of $\rcfm K$, and second, we will identify $M_E$ and $M_\infty(K)$. This second simplifications avoids the hassle of specifically defining the isomorphism $f:M_E \rightarrow M_\infty$ in each of the calculations.

With this convention, we study the extension 
\[\begin{tikzcd}
	0 \arrow{r} &M_\infty(K) \arrow{r} &E \arrow {r} &A \arrow{r} &0
\end{tikzcd}\]by comparing it with the extension 
\[\begin{tikzcd}
	0 \arrow{r} &M_\infty(K) \arrow{r} &\rcfm K \arrow {r} &Q(K) \arrow{r} &0
\end{tikzcd}\] with $Q(K) := \rcfm K / M_\infty(K)$. This extension is ``largest" extension in the sense that $\rcfm K$ is algebra of multipliers of $M_\infty(K)$, in other words, $\rcfm K$ is the largest algebra which contains $M_\infty(K)$ as a faithful ideal, (\cite{regmul} Proposition 1.1).

Two equivalent constructions will be given for this comparison; the first extends the map defined in Lemma \ref{faithful embed}. The second will work backward from a homomorphism from $A$ to $Q(K)$.
\subsection{Construction 1}

Define the quotient map $\pi: \rcfm K \rightarrow Q(K)$, its restriction map $\pi|_E: E \rightarrow A$, and construct the following commutative diagram featuring the to-be-defined map $\psi: A \rightarrow Q(K)$.

\begin{equation}\label{const1}\begin{tikzcd}
		0 \arrow{r} &M_\infty(K) \arrow[hook]{r}{\iota} \arrow[equals]{d} &\rcfm K \arrow{r}{\pi} &Q(K) \arrow{r} &0\\
		0 \arrow{r} &M_\infty(K) \arrow[hook]{r}[swap]{i} &E \arrow{u}[swap]{\phi} \arrow{r}[swap]{\pi|_E}&A \arrow[dashed]{u}[swap]{\psi}\arrow{r} &0
\end{tikzcd}\end{equation}
Let $\psi: A \rightarrow Q(K)$ be the image of $\phi$ on cosets, that is $a + M_\infty(K) \mapsto \phi(a) + M_\infty(K)$ for any $a \in E$. Certainly this is a well-defined homomorphism, and it is straightforward to see that this produces a commutative diagram.

\begin{lemma}\label{faithfulideal}
	$\psi$ is injective if and only if $M_\infty(K)$ is a faithful ideal of $E$.
\end{lemma}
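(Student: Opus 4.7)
The plan is to prove both directions by directly unpacking the definition of $\psi$ and leveraging Lemma \ref{faithful embed} together with the commutativity of diagram (\ref{const1}).

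For the reverse direction, I would assume $M_\infty(K)$ is a faithful ideal of $E$, so that Lemma \ref{faithful embed} gives that $\phi$ is injective. Suppose $\psi(a + M_\infty(K)) = 0$, meaning $\phi(a) \in M_\infty(K)$. The key observation is that $\phi$ restricted to $M_\infty(K) \subseteq E$ is precisely the inclusion $\iota$, so if I set $m := \phi(a)$, viewed as an element of $M_\infty(K) \subseteq E$, then $\phi(m) = \iota(m) = m = \phi(a)$. Injectivity of $\phi$ then forces $a = m \in M_\infty(K)$, so $a + M_\infty(K) = 0$ in $A$. Hence $\psi$ is injective.

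For the forward direction, I would argue by contrapositive: assume $M_\infty(K)$ is not faithful in $E$ and produce a nonzero element of $\ker \psi$. Non-faithfulness gives a nonzero $a \in E$ with either $aM_\infty(K) = 0$ or $M_\infty(K)a = 0$; the two cases are symmetric so I handle the first. A small preliminary step I would record is that $M_\infty(K)$ is faithful as a module over itself (any nonzero $m \in M_\infty(K)$ satisfies $m e_{ij} \neq 0$ for some $i,j$, as one sees from the column-by-column action of $e_{ij}$), hence the witness $a$ must lie outside $M_\infty(K)$, ensuring $a + M_\infty(K) \neq 0$ in $A$. Now from the explicit formula $\phi(a)_{ij} = E_{ii}aE_{jj}$, the hypothesis $aM_\infty(K) = 0$ forces $aE_{jj} = 0$ for every $j$, so $\phi(a) = 0$ and therefore $\psi(a + M_\infty(K)) = \phi(a) + M_\infty(K) = 0$. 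The case $M_\infty(K) a = 0$ is identical with rows in place of columns.

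No serious obstacle is expected: the heart of the argument is that $\phi$ acts as the identity on $M_\infty(K)$ and is built from the matrix-unit sandwiches $E_{ii}aE_{jj}$, so both directions reduce to bookkeeping about when these sandwiches vanish. The only mild subtlety is remembering to rule out the case $a \in M_\infty(K)$ in the forward direction, which the self-faithfulness of $M_\infty(K)$ handles cleanly.
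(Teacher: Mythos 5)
Your proof is correct. The reverse direction (faithfulness implies $\psi$ injective) is essentially the paper's own argument: both reduce $\phi(a)\in M_\infty(K)$ to $a\in M_\infty(K)$ using the injectivity of $\phi$ from Lemma \ref{faithful embed} and the fact that $\phi$ restricts to the identity on the ideal; your version is, if anything, stated more cleanly than the paper's. The forward direction is where the routes genuinely differ. The paper argues abstractly: injectivity of $\psi$ forces injectivity of $\phi$ by a diagram chase on Equation \ref{const1} (the short five lemma), and then the ``injective iff faithful'' equivalence of Lemma \ref{faithful embed} finishes. You instead take the contrapositive and manufacture an explicit nonzero element of $\ker\psi$ from a witness $a$ of non-faithfulness, via the formula $\phi(a)_{ij}=E_{ii}aE_{jj}$. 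This costs you one extra observation that the paper's route never needs --- that $M_\infty(K)$ is faithful over itself, so the witness $a$ necessarily lies outside the ideal and its coset is nonzero in $A$ --- but that observation is both correct and indispensable for your argument, and you supply it. What your approach buys is independence from the ``only if'' half of Lemma \ref{faithful embed} and from any diagram chase, at the price of a slightly longer, more hands-on computation; the paper's approach buys brevity by leaning on the earlier lemma. Both are sound.
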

\begin{proof}
	Suppose that $M_\infty(K)$ is a faithful ideal of $E$, note that Lemma \ref{faithful embed} shows that $\phi$ is an embedding. Say that that there exists some $a \in A$ such that $\psi(a) = 0$. Thus $\phi(a) + M_\infty(K) = 0$, which implies that $0 = \phi(a) - m = \phi(a - m)$ for all $m \in M_\infty(K)$. Because $\phi$ is injective, $a = m$, which further implies that $a = 0$ in $A \simeq E/M_\infty(K)$. So $\psi$ is injective.
	
	If $\psi$ is injective, a diagram chase assures that $\phi$ is injective. Thus by Lemma \ref{faithful embed}, $M_\infty(K)$ must be an faithful ideal of $E$.
\end{proof}

\begin{defn}
	This homomorphism $\psi$ will be called the {\it invariant} of the extension $E$.
\end{defn}

\subsection{Construction 2}

This construction will start with the invariant $\psi$ and work backward. A key tool in this construction will be the pullback in the category of $K$-algebras, which we recall here.

Given algebras $A$, $B$, and $C$ and homomorphisms $f: A \to C$ and $g: B \to C$,  the pullback of $C$ along the homomorphisms $f$ and $g$ is constructed
\[P  = A \oplus_C B= \{(a,b) \in A \oplus B : f(a) = g(b)\}.\]  Furthermore, recall that the pullback is unique up to isomorphism.

Suppose that there is a homomorphism $\psi: A \rightarrow Q(A)$ such that we have the following diagram (dashed arrows and $\bullet$'s indicate to-be-filled in homomorphisms and algebras, respectively):
\[\begin{tikzcd}
	0 \arrow{r} &M_\infty(K) \arrow{r}{i}& \rcfm K \arrow{r}{\pi}&Q(K) \arrow{r} &0\\
	0 \arrow[dashed]{r} &\bullet \arrow[dashed]{r} \arrow[dashed]{u} &\bullet\arrow[dashed]{r} \arrow[dashed]{u} &A\arrow{u}{\psi} \arrow[dashed]{r} &0
\end{tikzcd}\]
Define the pullback $P$ of $Q(K)$ along the homomorphisms $\pi$ and $\psi$ with maps $\alpha: P \rightarrow \rcfm K$ and $\beta: P \rightarrow A$ defined as the projections on the first and second coordinates respectively. Thus $P = \rcfm K \oplus_{Q(K)} A$. Finally define a map $f: M_\infty(K) \rightarrow P$ by $m \mapsto (m, 0)$. The range of $f$ is certainly in $P$ since $\pi(m) = 0 = \psi(0)$. The only things left to check are the exactness of the bottom row and commutativity of the left square of the following diagram, whose proofs are straightforward.

\begin{equation}\label{const2}\begin{tikzcd}
		0 \arrow{r} &M_\infty(K) \arrow[equals]{d} \arrow{r}{i}& \rcfm K \arrow{r}{\pi}&Q(K) \arrow{r} &0\\
		0 \arrow{r} &M_\infty(K) \arrow{r}[swap]{f} &P \arrow{u}{\alpha} \arrow{r}[swap]{\beta} &A\arrow{u}{\psi} \arrow{r} &0
\end{tikzcd}\end{equation}

\begin{rmk}
	Note that this $\alpha: P \rightarrow \rcfm K$ satisfies the conditions of Lemma \ref{faithful embed} and is thus the unique homomorphism guaranteed in the argument. Thus Construction 1 assures that $\psi$ is an invariant of the extension $(P, f, \beta)$ of $A$ by $M_\infty(K)$.
\end{rmk}

\section{Equivalence of Extensions}\label{equivalence}
These two constructions are connected in the following way.

\begin{prop}
	Suppose that $(E, i, \pi)$ is an extension of $A$ by $M_\infty(K)$ with invariant $\psi: A \rightarrow Q(K)$ as in Equation \ref{const1}. Then there is an isomorphism $\Phi$ between $E$ and the pullback $P$ of $Q(K)$ along $\psi$ and $\pi$ making the following diagram commute.
	\[\begin{tikzcd}
		0 \arrow{r} &M_\infty(K) \arrow[equals]{d} \arrow{r}{i} &E \arrow{d}{\Phi} \arrow{r}{\pi|_E} &A \arrow{r} \arrow[equals]{d} &0\\
		0 \arrow{r} &M_\infty(K) \arrow{r}[swap]{f} &P \arrow{r}[swap]{\beta} &A \arrow{r} &0
	\end{tikzcd}\]
\end{prop}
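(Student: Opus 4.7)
The plan is to define the comparison map directly using the two ingredients already on hand in Construction 1, namely $\phi: E \to \rcfm K$ and $\pi|_E: E \to A$, and then let the pullback's universal property do the work. Explicitly, I would set
\[\Phi(e) = \bigl(\phi(e),\, \pi|_E(e)\bigr).\]
First I would verify that $\Phi$ actually lands in $P = \rcfm K \oplus_{Q(K)} A$; this requires $\pi(\phi(e)) = \psi(\pi|_E(e))$ for every $e \in E$, but this is exactly how $\psi$ was defined in Construction 1 (as the homomorphism induced by $\phi$ on cosets), so the compatibility is automatic. That $\Phi$ is a $K$-algebra homomorphism is then inherited from $\phi$ and $\pi|_E$ being homomorphisms.

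Next I would check that the two squares in the diagram commute. The right square, $\beta \circ \Phi = \pi|_E$, is immediate because $\beta$ is projection onto the second coordinate. For the left square I need $\Phi \circ i = f$, i.e., for $m \in M_\infty(K)$,
\[\Phi(i(m)) = \bigl(\phi(i(m)),\, \pi|_E(i(m))\bigr) = (m,\,0) = f(m).\]
The first coordinate uses that $\phi \circ i = \iota$ (the natural inclusion into $\rcfm K$), under the standing identification between $M_E$ and $M_\infty(K)$; the second coordinate uses exactness of the original sequence.

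Finally I would conclude that $\Phi$ is an isomorphism. The cleanest route is the Short Five Lemma applied to the commutative diagram whose outer vertical arrows are identities; this forces $\Phi$ to be bijective. If instead I wanted to argue directly, injectivity follows from the injectivity of $\phi$ guaranteed by Lemma \ref{faithful embed} under our standing assumption that $M_\infty(K)$ is a faithful ideal of $E$ (if $\Phi(e) = 0$ then $\phi(e) = 0$, so $e = 0$). For surjectivity, given $(r, a) \in P$, I would choose any lift $e' \in \pi|_E^{-1}(a)$; then $\pi(r - \phi(e')) = \psi(a) - \psi(a) = 0$, so $r - \phi(e') \in M_\infty(K)$, and correcting by this element produces $e \in E$ with $\Phi(e) = (r, a)$.

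I do not expect a real obstacle in this argument: the proposition is essentially a formal consequence of the way $\psi$ and $P$ were constructed, together with the universal property of the pullback. The only subtlety worth being careful about is the identification convention between $M_\infty(K)$ sitting inside $E$ and sitting inside $\rcfm K$, which is what makes $\phi \circ i = \iota$ hold on the nose rather than only up to the fixed isomorphism $f: M_E \to M_\infty(K)$.
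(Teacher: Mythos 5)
Your proposal is correct and matches the paper's own proof essentially step for step: the same map $\Phi(e) = (\phi(e), \pi|_E(e))$, the same compatibility check placing it in $P$, and the same lift-and-correct argument for surjectivity. The only (immaterial) difference is in injectivity: the paper avoids invoking injectivity of $\phi$ on all of $E$ by noting that $\Phi(a)=0$ forces $a \in M_\infty(K)$, where $\phi$ is the identity--an argument that, like your Short Five Lemma route, does not need the faithfulness hypothesis.
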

\begin{proof}
	Define $\Phi: E \rightarrow P$ by $\Phi(a) = (\phi(a), \pi|_E(a))$; the map $\phi$ is as in Equation \ref{const1}. Note that $\pi(\phi(a)) = \psi (\pi|_E(a))$, so $(\phi(a), \pi|_E(a)) \in P$.
	
	Now we claim that $\Phi$ is a bijection. To show that it is injective, suppose that $0 = \Phi(a) = (\phi(a), \pi|_E(a))$. Since $\pi|_E$ is the surjection onto $E/M_\infty(K)$, $a \in M_\infty(K)$. But, since $\phi$ acts as the identity on $M_\infty(K)$, we must have that $0 = \phi(a) = a$.
	
	Now suppose that there is some $(m, a) \in P$. Since $a \in A$ and $\pi|_E$ is surjective, there is some $x \in E$ such that $\pi_E(x) = a$. Now let us construct the element $\phi(x) - m \in \rcfm K$. 
	\begin{equation*}
		\begin{split}
			\pi(\phi(x) - m) &= \pi(\phi(x)- \pi(m))\\
			&= \psi(\pi|_E(x))- \pi(m)\\
			&= \psi(a)- \pi(m) = 0.
		\end{split}
	\end{equation*}
	The last equality is due to the fact that in $P$, $\psi(a) = \pi(m)$. Thus there is some $k \in M_\infty(K)$ such that $\phi(x) = m + k$. Define $\hat x = x - k \in E$. Then $\phi(\hat x) = \phi(x - k) = \phi(x) - k = m$. Furthermore, $\pi|_E(\hat x) = \pi|_E(x) = a$, and $\Phi(\hat x) = (m,a)$.
	
	Commutativity of the diagram follows from a simple calculation, which completes the proof.
\end{proof}

This motivates the following definition

\begin{defn}
	Let $(E_1, i_1 \pi_1)$ and $(E_2, i_2, \pi_2)$ be two extensions of $A$ by $M_\infty(K)$, then the two extensions are {\it strongly equivalent} if there is some isomorphism $\Phi: E_1 \rightarrow E_2$ which makes the following diagram commute
	\[\begin{tikzcd}
		0 \arrow{r} &M_\infty(K) \arrow[equals]{d} \arrow{r}{i_1} &E_1 \arrow{d}{\Phi} \arrow{r}{\pi_1} &A \arrow{r} \arrow[equals]{d} &0\\
		0 \arrow{r} &M_\infty(K) \arrow{r}[swap]{i_2} &E_2 \arrow{r}[swap]{\pi_2} &A \arrow{r} &0
	\end{tikzcd}\]
	
	It is straightforward to see that strong equivalence is an equivalence relation, thus denote by $[E_\psi]$ the strong equivalence class of the pullback algebra $E_\psi$ of $Q(K)$ along $\psi$ and $\pi$.
\end{defn}

The following result, which was first presented in a more general form by Hochschild \cite{hochschild}, is a direct consequence of the uniqueness of the pullback and the equivalence of the two constructions.

\begin{thm}\label{c1 c2 equivalent}
	There is a one-to-one correspondence between the set of algebra homomorphisms $\hom(A, Q(K))$ and the strong equivalence classes of extensions of $A$ by $M_\infty(K)$
\end{thm}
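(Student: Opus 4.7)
The plan is to construct mutually inverse assignments between $\hom(A, Q(K))$ and the set of strong equivalence classes of extensions. In one direction, send a strong equivalence class $[E]$ of an extension $(E,i,\pi)$ to its invariant $\psi_E$ built in Construction 1. In the other direction, send a homomorphism $\psi$ to the strong equivalence class $[P_\psi]$ of the pullback $P_\psi := \rcfm K \oplus_{Q(K)} A$ of Construction 2. The universal property of the pullback pins $P_\psi$ down up to an isomorphism that intertwines the maps $f$ and $\beta$, so the second assignment is automatically well-defined on strong equivalence classes. The bulk of the work is therefore to show the first assignment descends to strong equivalence classes, and then that the two assignments are mutually inverse.

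For well-definedness of $[E] \mapsto \psi_E$, suppose $\Phi: E_1 \to E_2$ realizes a strong equivalence. Commutativity of the strong equivalence diagram gives $\Phi \circ i_1 = i_2$, so $\Phi$ restricts to the identity on $M_\infty(K)$. Writing $\phi_1$ and $\phi_2$ for the embeddings into $\rcfm K$ produced by Lemma \ref{faithful embed} for $E_1$ and $E_2$, the composite $\phi_2 \circ \Phi : E_1 \to \rcfm K$ also extends the canonical inclusion $\iota: M_\infty(K) \hookrightarrow \rcfm K$. The uniqueness clause of Lemma \ref{faithful embed} then forces $\phi_2 \circ \Phi = \phi_1$, and passing to cosets using $\pi_2 \circ \Phi = \pi_1$ yields $\psi_{E_1} = \psi_{E_2}$.

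To finish I would verify that the two composites are identity maps. Given $\psi$, the pullback $P_\psi$ carries the projection $\alpha: P_\psi \to \rcfm K$, and the remark following Construction 2 observes that $\alpha$ is precisely the map furnished by Lemma \ref{faithful embed}; hence the invariant of $[P_\psi]$ is $\psi$ itself. Conversely, given an extension $E$ with invariant $\psi_E$, the isomorphism $\Phi: E \to P_{\psi_E}$ produced in the preceding proposition witnesses that $[E] = [P_{\psi_E}]$. The main obstacle is the well-definedness step: one must resist the temptation to wave hands and instead invoke the uniqueness in Lemma \ref{faithful embed} explicitly, since otherwise there is no reason two different choices of strong equivalence $\Phi$ should produce the same invariant. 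Once this uniqueness bookkeeping is in place, the rest of the argument is essentially a corollary of the two constructions and the universal property of the pullback.
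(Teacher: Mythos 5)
Your proposal is correct and follows the same route the paper intends: it assembles the bijection from Construction 1, Construction 2, and the proposition identifying any extension with the pullback along its invariant, with the uniqueness clause of Lemma \ref{faithful embed} supplying the well-definedness of $[E] \mapsto \psi_E$ that the paper leaves implicit. No gaps.
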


\begin{rmk}
	While useful in the construction of algebras, the notion of strong equivalence is too fine for our purposes. Consider the pullback of $Q(K)$ along the natural surjection $\pi$ and invariant $\rho$ defined by  
	\[\rho(x) = T_{-1} := \sum_{i=1}^\infty \frac{1}{i+1}e_{i,i+1} \text{ and } \rho(x^{-1}) = T_{1} := \sum_{j =1}^\infty (j+1) e_{j+1,j}\]
Two things are of note. First, our assumption that $K$ is a field of characteristic $0$ is necessary for this to be well-defined. Second, $T_{-1}T_1 = I_\infty$, but $T_1T_{-1} = I_\infty - e_{11}$. Since these two differ in only one entry, $\psi$ is an embedding of $K[x,x^{-1}]$ into $Q(K)$, hence this is a faithful extension of $K[x,x^{-1}]$ by $M_\infty(K)$. Furthermore, the mapping $T_{-1} \mapsto x$ and $T_{1} \mapsto y$ shows that this extension is isomorphic to $\mathcal T$. If $\psi_1$ is the invariant of the Jacobson embedding of $\mathcal T$, namely $\psi_1(x) = S_{-1}$ and $\psi_1(y) = S_1$,  then it is obvious that $\rho(x) - \psi_1(x) \notin M_\infty(K)$. Hence the two extensions occupy separate strong equivalence classes, which is not desirable for two otherwise isomorphic matrix algebras. 
\end{rmk}
\noindent To remedy this, we introduce a weaker notion of equivalence for extensions.

\begin{defn}
	Two extensions $E_1$ and $E_2$ of $A$ by $M_\infty(K)$ are said to be {\it equivalent} if there is an isomorphism $\Phi: E_1 \rightarrow E_2$ which restricts to an automorphism of $M_\infty(K)$ and makes the following diagram commute.
	\[\begin{tikzcd}
		0 \arrow{r} &M_\infty(K) \arrow{r}{i_1} \arrow{d}[swap]{\Phi|_{M_\infty(K)}} &E_1 \arrow{r}{\pi|_{E_1}} \arrow{d}{\Phi} &A \arrow[equals]{d}\arrow{r} &0\\
		0 \arrow{r} &M_\infty(K) \arrow{r}{i_2} &E_2 \arrow{r}{\pi|_{E_2}} &A \arrow{r} &0
	\end{tikzcd}.\]
\end{defn}
\begin{rmk}\label{equivalent embed}
	It is clear to see that any two strongly equivalent extensions are equivalent and this relation is also an equivalence relation. Furthermore, the two extensions $E_1$ and $E_2$ given by invariants $\psi_1$ and $\rho$, respectively, in the previous remark are equivalent. Note that we may define the natural isomorphism $\Phi: E_1 \to E_2$ by $\Phi(S_i) = T_i$ for $i \in \{-1,1\}$ and extending linearly. This then induces an automorphism of the ideal $M_\infty(K)$ by the mapping
	\[\Phi|_{M_\infty(K)}(e_{ij}) = \frac{i!}{j!}e_{ij},\]
	hence the equivalence.
\end{rmk}

The following result re-frames this equivalence condition in terms of the invariants of the respective extensions.

\begin{thm}\label{equivalentinvariants}
	Let $\psi_1, \psi_2: A \rightarrow Q(K)$ be two embeddings of $A$ into $Q(K)$, leading to faithful extensions $E_1$ and $E_2$. $E_1$ and $E_2$ are equivalent if and only if there is some invertible matrix $U \in \rcfm K$ such that $\widehat{\pi(U)}(\psi_1) = \psi_2$.
\end{thm}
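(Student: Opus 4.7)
The plan is to prove both implications separately, leveraging Lemma~\ref{faithful embed} for the forward direction and the pullback description of Construction~2 for the converse.

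For the forward direction, suppose $\Phi: E_1 \to E_2$ realises the equivalence. Since $\Phi|_{M_\infty(K)}$ is an algebra automorphism of $M_\infty(K)$, Proposition~\ref{automorphism is inner} (together with the remark that $T$ must be row and column finite) yields an invertible matrix $U \in \rcfm K$ with $\Phi(m) = \widehat{U}(m) = U^{-1} m U$ for every $m \in M_\infty(K)$. The crux is the identification $\phi_2 \circ \Phi = \widehat{U}\circ \phi_1$, which I would establish by the following trick: the composite $\widehat{U}^{-1}\circ \phi_2 \circ \Phi \colon E_1 \to \rcfm K$ is a $K$-algebra homomorphism whose restriction to $M_\infty(K)$ is $m \mapsto U(U^{-1}mU)U^{-1} = m$, i.e.\ the natural inclusion. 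By the uniqueness assertion in Lemma~\ref{faithful embed} this composite must equal $\phi_1$, so $\phi_2 \circ \Phi = \widehat{U}\circ \phi_1$. Projecting both sides through $\pi$, and using that $\Phi$ covers the identity on $A$ (so that $\Phi(a)$ is a lift of $\bar a$ whenever $a$ is), gives
\[\psi_2(\bar a) \;=\; \pi\bigl(\phi_2(\Phi(a))\bigr) \;=\; \pi(U)^{-1}\,\psi_1(\bar a)\,\pi(U) \;=\; \widehat{\pi(U)}(\psi_1(\bar a)),\]
with $\pi(U)$ invertible in $Q(K)$ because $\pi$ is unital.

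For the converse, given an invertible $U \in \rcfm K$ with $\widehat{\pi(U)}(\psi_1) = \psi_2$, I would invoke Theorem~\ref{c1 c2 equivalent} to replace $E_1$ and $E_2$ by their pullback realisations $P_i = \{(r,a) \in \rcfm K \oplus A : \pi(r) = \psi_i(a)\}$, and then define $\Phi_0 \colon P_1 \to P_2$ by $\Phi_0(r,a) = (U^{-1} r U,\, a)$. The hypothesis $\widehat{\pi(U)}(\psi_1) = \psi_2$ is exactly what is needed for $\Phi_0$ to land in $P_2$; it is clearly an algebra isomorphism with inverse $(r,a) \mapsto (UrU^{-1}, a)$, and restricted to the ideal $M_\infty(K) \hookrightarrow P_1$ (as pairs $(m,0)$) it coincides with the automorphism $\widehat{U}$ of $M_\infty(K)$. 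Commutativity of both squares of the equivalence diagram is then immediate from the definitions.

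I expect the main obstacle to be packaging the forward direction cleanly: one must spot that conjugating by $\widehat{U}^{-1}$ before applying Lemma~\ref{faithful embed} sidesteps any direct entry-by-entry argument that $\Phi$ is compatible with the matrix embeddings. Once that identity is in hand, the passage to the quotient in the forward direction and the verification of the pullback formula in the converse are routine bookkeeping.
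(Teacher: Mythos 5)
Your proof is correct and takes essentially the same route as the paper's: in the forward direction you extract $U$ from Proposition~\ref{automorphism is inner} and identify $\Phi$ with $\widehat{U}$ via the uniqueness clause of Lemma~\ref{faithful embed} (the paper simply redoes that uniqueness computation entry-by-entry inline), and your converse is the paper's conjugation-by-$U$ argument written in pullback coordinates rather than directly on the matrix subalgebras of $\rcfm K$. There are no gaps.
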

\begin{proof}
	Suppose that there exists an isomorphism $\Phi: E_1 \rightarrow E_2$ which restricts to an automorphism on $M_\infty(K)$. Define $f_{ij} = \Phi(e_{ij})$ for all $i, j \in \zplus$, then by Theorem \ref{automorphism is inner}, there exists some invertible matrix $U \in \rcfm K$ such that $\Phi(m) = \widehat{U}(m)$ for all $m \in M_\infty(K)$ and define $f_{ii} = \Phi(e_{ii})$. Let $a \in E_1$ and then calculate
	\begin{equation*}
		\begin{split}
			f_{ii} \Phi(a) f_{jj} &= \Phi(e_{ii}) \Phi(a) \Phi(e_{jj}) = \Phi(e_{ii} a e_{jj})\\
			&= U^{-1}(e_{ii} a e_{jj})U = f_{ii} \widehat{U}(a) f_{jj}
		\end{split}
	\end{equation*}
	
	Because $f_{ii} \Phi(a) f_{jj} = f_{ii} \widehat{U}(a) f_{jj}$ for all $a \in E_1$ and all $i,j \in \zplus$ and $M_\infty(K)$ is an faithful ideal of $A_2$, it is evident from the construction of the embedding of $A_2$ into $\rcfm K$ given in Lemma \ref{faithful embed} that $\Phi(a) = \widehat{U}(a)$. Now suppose that $(m, b) \in E_1 = \rcfm K \oplus_{Q(K)} A$ and that $(m', b') = \Phi((m,b)) \in E_2$. Thus $\psi_2(b') = \pi(m') = \pi(U^{-1} m  U) = \pi(U)^{-1} \pi(m) \pi(U) = \widehat{\pi(U)}(\psi_1(b))$, and $\rho = \widehat{\pi(U)}(\psi_1)$.
	
	Now suppose that there exists some invertible matrix $U \in \rcfm K$ such that $\widehat{\pi(U )}(\psi_1) = \psi_2$. We claim that $\Phi := \widehat U$ is the desired isomorphism which restricts to an automorphism of $M_\infty(K)$. Due to the construction of Lemma \ref{faithful embed}, $E_1$ and $E_2$ may be thought of as infinite matrix sub-algebras of $\rcfm K$ with faithful ideals isomorphic to $M_\infty(K) \subseteq \rcfm K$. Conjugation by a row and column finite matrix is an automorphism of $M_\infty(K)$, thus $\Phi = \widehat{U}$ restricts to an automorphism of $M_\infty(K)$.
	
	It remains to show that $\Phi := \widehat U$ is surjective, since it is certainly an injective algebra homomorphism. Suppose $x \in E_2$, then $\pi(x) \in Q(K)$, and by construction $\pi(x) = \psi_2(a)$ for some $a \in A$. Then, for some $a' \in A$, $\pi(x) = \widehat{\pi(U)}(\psi_1(a')) = \pi(\widehat U (x'))$ for some $x' \in E_1$. Thus $x = \widehat U (x' + m)$ for some $m \in M_\infty(K)$. Because $M_\infty(K)$ is an ideal of $E_1$, it is clear that $x' + m \in E_1$, and $\Phi$ is surjective.
\end{proof}

\section{Trivial Extensions of \texorpdfstring{$K[x,x^{-1}]$}{the Laurent polynomials} by \texorpdfstring{$M_\infty(K)$}{M(K)}}\label{trivial}

As an extension is a short exact sequence of algebras, a logical first step in the classification of an extension is examining when the short exact sequece splits.

\begin{defn}
	Let $(E, i, \pi)$ be a faithful extension of $A$ by $M_\infty(K)$, $E$ is called {\it trivial} if there is an algebra homomorphism $\rho: A \to E$ such that $\pi \circ \rho = \text{id}_A$. This then implies that the extension is trivial if and only if there is an injective homomorphism $\sigma: A \rightarrow \rcfm K$ such that $\pi \circ \sigma = \psi$.
\end{defn}

In this section we will give a complete characterization for trivial extensions of $K[x,x^{-1}]$ by $M_\infty(K)$. As a corollary, we will provide an alternate proof of Theorem 2 from \cite{structurelpapoly}, showing that the Toeplitz-Jacobson algebra is not a trivial extension. 

Our starting point will be an observation about the invariant of a faithful extension $E$ of $K[x,x^{-1}]$ by $M_\infty(K)$. Consider $\psi(x)$ and $\psi(x^{-1})$. In our examples thus far, the images of $x$ and $x^{-1}$ have been invertible in $Q(K)$. As a matter of fact, in order to have a faithful extension of $K[x,x^{-1}]$, this must always be the case. However, none of the pre-images $\pi^{-1}(\psi(x)) \in \rcfm K$ in our examples have been invertible. Such matrices will play a key role in our classification of trivial extensions.

\begin{defn}
	A matrix $A \in \rcfm K$ is called {\it algebraically Fredholm} if the image of $A$ under the natural surjection $\pi: \rcfm K \rightarrow Q(K)$ is invertible in $Q(K)$. In the remainder of this article, we will refer to such matrices simply as ``Fredholm matrices."
\end{defn}

In other words, $A$ is Fredholm if and only if  there exists $A_1, A_2 \in \rcfm K$ and $S_1, S_2 \in M_\infty$ such that $AA_1 = I_\infty - S_1$ and $A_2 A = I_\infty - S_2$. A consequence of this characterization: since $A_1$ and $A_2$ differ only by some element $R \in M_\infty(K)$, we can select some $A_0$ which functions as both a left and a right Fredholm inverse. That is, there exists $A_0$, $R_1$ and $R_2$ such that $A A_0 = I_\infty - R_1$ and $A_0 A = I_\infty - R_2$. Note that this also implies that Fredholm inverses are unique up to perturbation by some element of $M_\infty(K)$; when we say ``the" Fredholm inverse of a matrix, it is understood within this context.

\begin{prop}
	The family of Fredholm matrices is closed under multiplication.
\end{prop}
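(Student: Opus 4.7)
The plan is to exploit the fact that being Fredholm is defined purely in terms of invertibility in the quotient algebra $Q(K)$, which immediately makes the statement a consequence of the general fact that invertible elements of any ring are closed under multiplication. Specifically, given Fredholm matrices $A, B \in \rcfm K$, the images $\pi(A)$ and $\pi(B)$ are invertible in $Q(K)$, so their product $\pi(A)\pi(B)$ is invertible with inverse $\pi(B)^{-1}\pi(A)^{-1}$. Since $\pi$ is an algebra homomorphism, $\pi(AB) = \pi(A)\pi(B)$, so $\pi(AB)$ is invertible in $Q(K)$, and $AB$ is Fredholm by definition.

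For readers who prefer to see the Fredholm inverse built explicitly at the level of $\rcfm K$, I would also spell out the following: letting $A_0$ and $B_0$ be Fredholm inverses of $A$ and $B$, respectively, so that there exist $R_1, R_2, S_1, S_2 \in M_\infty(K)$ with $AA_0 = I_\infty - R_1$, $A_0 A = I_\infty - R_2$, $BB_0 = I_\infty - S_1$, and $B_0 B = I_\infty - S_2$, one verifies directly that $B_0 A_0$ serves as a Fredholm inverse of $AB$. Indeed,
\[
(AB)(B_0 A_0) = A(I_\infty - S_1)A_0 = AA_0 - AS_1 A_0 = I_\infty - R_1 - AS_1 A_0,
\]
and both $R_1$ and $AS_1 A_0$ belong to $M_\infty(K)$ because $M_\infty(K)$ is a two-sided ideal of $\rcfm K$. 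The analogous computation for $(B_0 A_0)(AB)$ is symmetric.

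There is no real obstacle here; the content is almost tautological once one observes that ``Fredholm'' means ``invertible mod $M_\infty(K)$.'' The only minor point worth highlighting in the write-up is that $M_\infty(K)$ being a two-sided ideal of $\rcfm K$ is exactly what guarantees that the error terms $AS_1 A_0$ and its analogue remain in $M_\infty(K)$, so one is not tempted to worry about row-or-column finiteness of the correction terms.
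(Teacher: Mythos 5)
Your proof is correct and takes essentially the same route as the paper: both observe that Fredholmness means invertibility of the image in $Q(K)$, so closure under multiplication is immediate, with $B_0A_0$ serving as a Fredholm inverse of $AB$. Your explicit verification at the level of $\rcfm K$ is a harmless elaboration of the same argument.
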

\begin{proof}
	Say that $A$ and $B$ are matrices such that $\overline A$ and $\overline B$ are invertible in $Q(K)$. Suppose $A$ has Fredholm inverse $A_0$ and $B$ has Fredholm inverse $B_0$. Then consider $\overline{AB} = \bar{A} \bar{B}$ which is clearly invertible in $Q(K)$ with Fredholm inverse $\bar{B_0}\bar{A_0}$.
\end{proof}

In the theory of Banach algebras, Fredholm operators are defined as those operators $T$ with closed range such that $\dim(\ker(T))$ and $\dim(\ker(T'))$ are finite (where $T'$ is the Hilbert space adjoint of $T$). The following result establishes that if a matrix is Fredholm, then $\dim(\ker(T))$ and $\dim(V/TV)$ are finite. Furthermore, using this fact we will introduce the ``index" of a matrix which will function as a measurement for how far a given Fredholm matrix is from being invertible. For any matrix $A \in \rcfm K$, will use the notation $\im(A)$, $\ker(A)$, and $\coker(A)$ for the image, kernel, and cokernel (respectively) of the endomorphism $\mathcal L_{A}$, where $\mathcal L_A$ denotes the linear transformation $x \mapsto Ax$.

\begin{lemma}\label{fredholm dimension}
	If $A \in \rcfm K$ is a Fredholm matrix then $\ker(A)$ and $\coker(A)$ are finite dimensional subspaces of $V$.
\end{lemma}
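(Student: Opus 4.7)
The plan is to exploit the Fredholm inverse $A_0$ of $A$ directly. By the discussion immediately preceding the lemma, we may choose $A_0 \in \rcfm K$ and $R_1, R_2 \in M_\infty(K)$ with
\[ A A_0 = I_\infty - R_1 \qquad \text{and} \qquad A_0 A = I_\infty - R_2. \]
Since $R_1, R_2 \in M_\infty(K)$, the corresponding endomorphisms $\mathcal L_{R_1}, \mathcal L_{R_2}$ have finite-dimensional image in $V$ (in fact only finitely many basis vectors can appear in $\mathcal L_{R_i} V$, as each $R_i$ has only finitely many nonzero entries). The entire argument reduces to observing that $\ker(A)$ sits inside $\im(R_2)$ and $\coker(A)$ is a quotient of $\im(R_1)$.

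First I would handle the kernel. If $v \in \ker(A)$, then $0 = A_0 A v = (I_\infty - R_2)v$, so $v = R_2 v \in \im(R_2)$. Thus $\ker(A) \subseteq \im(R_2)$, and since $\dim \im(R_2) < \infty$, the kernel is finite dimensional.

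For the cokernel, let $q : V \to V/\im(A)$ denote the quotient map. For any $v \in V$, we have $v - R_1 v = AA_0 v \in \im(A)$, so $q(v) = q(R_1 v)$. Hence $q$ factors as $V \xrightarrow{\mathcal L_{R_1}} \im(R_1) \xrightarrow{q|_{\im(R_1)}} V/\im(A)$, and the second map is surjective (since the composite is). Therefore
\[ \dim \coker(A) = \dim(V/\im(A)) \leq \dim \im(R_1) < \infty. \]

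I do not anticipate a serious obstacle. The only care needed is to use both identities $AA_0 = I_\infty - R_1$ and $A_0 A = I_\infty - R_2$ (a one-sided Fredholm inverse would control only one of the two quantities), and to note that the argument does not require $A_0$ to be a genuine inverse of $A$, only a Fredholm inverse — which is exactly what the hypothesis supplies.
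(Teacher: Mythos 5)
Your proof is correct, and it runs on the same fuel as the paper's: the two identities $AA_0 = I_\infty - R_1$ and $A_0A = I_\infty - R_2$ supplied by a two-sided Fredholm inverse. The differences are in execution, and in both halves your version is the more direct one. For the kernel, the paper argues by contradiction, assembling a hypothetical infinite linearly independent subset of $\ker(A)$ into a matrix $B$ and deriving $B = R_2B$; your observation that any $v \in \ker(A)$ satisfies $v = R_2v$, hence $\ker(A) \subseteq \im(R_2)$, is the same computation stated as a containment, with no contradiction needed. For the cokernel, the paper notes $\im(AA_0) \subseteq \im(A)$ and then passes through the chain $\coker(I_\infty - R_1) = V/\im(I_\infty - R_1) \simeq \ker(I_\infty - R_1) \subseteq \im(R_1)$, where the middle isomorphism (essentially the index-zero property of a finite-rank perturbation of the identity) is asserted without justification; your factorization of the quotient map $q\colon V \to V/\im(A)$ through $\im(R_1)$ bypasses that step entirely and yields $\dim\coker(A) \leq \dim\im(R_1)$ with nothing left unproved. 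So while the route is essentially the paper's, your cokernel argument is tighter, and your correctly flagged point that a one-sided Fredholm inverse would only control one of the two quantities is exactly the right thing to be careful about.
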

\begin{proof}If $A$ is Fredholm, then there exists $A_0\in \rcfm K$ and $R, S \in M_\infty(K)$ be matrices such that $A_0A = I_\infty - R$ and $AA_0 = I_\infty - S$.
	
	To show that $\ker(A)$ is finite dimensional, suppose that there is an infinite, linearly independent set of elements in $\ker(A)$, $\{b_i : i \in \zplus\}$. Then $Ab_i = 0$ for each $i \in \zplus$. Construct a matrix $B = (b_1 \; |\; b_2\; |\; \cdots\; )$. By construction $AB = 0$.  Then
	\[0 = A_0(AB) = (A_0A)B = B - RB.\] Thus $B = RB$; this is a contradiction; thus for a Fredholm matrix $A$, $\ker(A)$ must be finite dimensional. 
	
	For the other claim, first note that $\im(AA_0) \subseteq \im(A)$; thus $\im(I_\infty - S) \subseteq \im(A)$ which means that $\coker(A) \subseteq \coker(I_\infty - S)$. Thus 
	\[\dim(\coker(A)) \leq \dim(\coker(I_\infty - S)).\]
	We claim that $\dim(\coker(I_\infty - S))$ also finite. Note that
	\[\coker(I_\infty - S) = V / \im(I_\infty - S) \simeq \ker(I_\infty-S)  \subseteq \im(S).\] Since the dimension of $\im(S)$ is finite, the dimension of the cokernel must be finite also, which proves the claim.
\end{proof}

\begin{defn}
	Let $A$ be a Fredholm matrix, we define the {\it index} of $A$ to be 
	\[\ind(A) = \dim(\ker(A)) - \dim(\coker(A)).\]
\end{defn}

Note that the shift matrices $S_i$ and the $T_i$ from Remark \ref{equivalent embed} are Fredholm and one may calculate their indices:
\[\ind(S_i) = -i, \text{ and } T_{\pm 1} = \mp 1\]
for every $i \in \Z$. The proof of the following proposition is adapted from the theory of Fredholm operators in functional analysis. See, for example, \cite{schechter} Chapter 5.

\begin{prop}\label{fredholm properties}
	Let $A$ and $B$ be Fredholm matrices and let $T \in M_\infty(K)$, and let $A_0$, $R$, and $S$ be as in the proof of Lemma \ref{fredholm dimension}.
	\begin{enumerate}
		\item $\ind(AB) = \ind(A) + \ind(B)$.
		\item $\ind(A_0) = - \ind(A)$. 
		\item $A+T$ is Fredholm, and $\ind(A + T) = \ind(A)$.
	\end{enumerate}
\end{prop}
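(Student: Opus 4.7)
The strategy is to prove (1) first, derive the auxiliary identity $\ind(I_\infty - R) = 0$ for every $R \in M_\infty(K)$, and use these together to obtain (2) and (3) quickly.

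For (1), the product $AB$ is Fredholm by the preceding proposition, so each of $\ker(A), \ker(B), \ker(AB), \coker(A), \coker(B), \coker(AB)$ is finite-dimensional by Lemma \ref{fredholm dimension}. The main construction is a six-term exact sequence of finite-dimensional $K$-vector spaces
\[0 \to \ker(B) \to \ker(AB) \xrightarrow{B} \ker(A) \xrightarrow{\gamma} \coker(B) \xrightarrow{A_*} \coker(AB) \xrightarrow{\pi} \coker(A) \to 0,\]
where the first arrow is inclusion, the map $B$ is restriction to $\ker(AB)$ (which lands in $\ker(A)$ since $A(Bv) = ABv = 0$), $\gamma$ sends $v \mapsto v + \im(B)$, $A_*$ sends $v + \im(B) \mapsto Av + \im(AB)$, and $\pi$ is induced by $\im(AB) \subseteq \im(A)$. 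Well-definedness of $A_*$ uses $A(\im(B)) \subseteq \im(AB)$; exactness at each interior node reduces to identifications such as $B(\ker(AB)) = \ker(A) \cap \im(B)$ and $\ker(A_*) = (\ker(A) + \im(B))/\im(B)$. Setting the alternating sum of dimensions equal to zero rearranges to $\ind(AB) = \ind(A) + \ind(B)$.

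For $\ind(I_\infty - R) = 0$, I would let $W = \im(R)$, observe that $R(V) \subseteq W$ so that $I_\infty - R$ restricts to an endomorphism of the finite-dimensional subspace $W$ and descends to the identity on $V/W$, and apply the snake lemma to
\[\begin{tikzcd}
0 \arrow{r} & W \arrow{r} \arrow{d}{(I_\infty - R)|_W} & V \arrow{r} \arrow{d}{I_\infty - R} & V/W \arrow{r} \arrow{d}{\mathrm{id}} & 0 \\
0 \arrow{r} & W \arrow{r} & V \arrow{r} & V/W \arrow{r} & 0.
\end{tikzcd}\]
Since the identity has trivial kernel and cokernel, the resulting long exact sequence collapses to isomorphisms $\ker(I_\infty - R) \cong \ker((I_\infty - R)|_W)$ and $\coker(I_\infty - R) \cong \coker((I_\infty - R)|_W)$, so rank-nullity on the finite-dimensional $W$ gives $\ind(I_\infty - R) = 0$. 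Combining this with (1) applied to $AA_0 = I_\infty - R_1$ yields (2): the relations $AA_0 = I_\infty - R_1$ and $A_0A = I_\infty - R_2$ already show that $A_0$ is Fredholm (with $A$ serving as its Fredholm inverse), and then $0 = \ind(I_\infty - R_1) = \ind(AA_0) = \ind(A) + \ind(A_0)$.

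For (3), the identities $(A+T)A_0 = I_\infty - (R_1 - TA_0)$ and $A_0(A+T) = I_\infty - (R_2 - A_0T)$ together with the fact that $M_\infty(K)$ is an ideal of $\rcfm K$ show that $A_0$ remains a Fredholm inverse for $A + T$. Applying (2) to both $A$ and $A+T$ with the common Fredholm inverse $A_0$ then gives $\ind(A+T) = -\ind(A_0) = \ind(A)$. The main obstacle is (1): verifying exactness at each of the four interior nodes of the six-term sequence demands careful bookkeeping, although each individual check is elementary. Once (1) is in place, the remaining pieces are essentially algebraic manipulations with the Fredholm inverse.
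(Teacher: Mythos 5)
Your proposal is correct, and parts (2) and (3) proceed essentially as the paper does: the paper likewise derives (2) from $\ind(I_\infty - R) = 0$ together with additivity applied to $A_0A$, and proves (3) by exhibiting $A_0$ as a Fredholm inverse of $A+T$ via the ideal property of $M_\infty(K)$. Where you genuinely diverge is in part (1) and in the auxiliary fact $\ind(I_\infty - R) = 0$. The paper proves additivity by an explicit decomposition of $V$ into subspaces $V_1 = \ker(A) \cap \im(B)$, $V_2$, $V_3$, $V_4$ (plus two further complements $W$ and $X$) and then bookkeeping the dimensions directly; you instead package the same information into the six-term exact sequence
\[0 \to \ker(B) \to \ker(AB) \to \ker(A) \to \coker(B) \to \coker(AB) \to \coker(A) \to 0\]
and take the alternating sum. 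The two arguments encode identical linear algebra (your identity $B(\ker(AB)) = \ker(A)\cap \im(B)$ is exactly the paper's $\dim(W) = d_1$, and $\ker(A_*) = (\ker(A)+\im(B))/\im(B)$ corresponds to the paper's $\dim(X) = d_4$), but the exact-sequence formulation localizes each verification to a single node and avoids having to choose complements, at the cost of four exactness checks that you correctly flag as the main labor. Similarly, for $\ind(I_\infty - R)=0$ the paper asserts the isomorphism $V/\im(I_\infty - R) \simeq \ker(I_\infty - R)$ rather tersely, whereas your snake-lemma reduction to the finite-dimensional invariant subspace $W = \im(R)$ and rank--nullity there is a cleaner and fully rigorous justification of the same point. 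Both routes are sound; yours is the more homological and arguably more scalable one, the paper's the more hands-on.
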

\begin{proof}
	To prove (1), we divide $V$ up into four subspaces $V_1$, $V_2$, $V_3$, and $V_4$.
	\[\begin{array}{r c l}
		V_1 &= &\ker(A) \cap \im(B),\\
		\im(B) &= &V_1 \oplus V_2,\\
		\ker(A) &= &V_1 \oplus V_3 \text{, and from here we get}\\
		V &= &\im(B) \oplus V_3 \oplus V_4.
	\end{array}\]
	
	Note that by Lemma \ref{fredholm dimension}, $V_1$ and  $V_3$ are subspaces of $\ker(A)$, and $V_4$ is isomorphic to a subspace of $\coker(B)$. Hence, by assumption, $V_1$, $V_3$, and $V_4$ are all finite dimensional. Thus we may define $d_i = \dim(V_i)$ for $i \in \{1, 3, 4\}$. In addition, one can find two more subspaces $W, X \subseteq V$ by writing $\ker(AB) = \ker(B) \oplus W$ and $\im(A) = \im(AB) \oplus X$. Since $W \subseteq \ker(AB)$ and $X$ is isomorphic to some subspace of $\coker(AB)$, both $W$ and $X$ are finite dimensional. Note that $W$ is the subspace of all vectors $v$ such that $v \in \im(B)$ but $v \in \ker(A)$, so $\dim(W) = d_1$. Also note that $\im(A) = \mathcal{L}_A(V) = \mathcal{L}_A(\im(B) \oplus V_3 \oplus V_4) = \im(AB) \oplus \mathcal{L}_A(V_4)$. Since $\ker(A) = V_1 \oplus V_3$, $\mathcal{L}_A$ must be a one-to-one linear transformation from $V_4$ to $W$ which implies that $V_4$ and $X$ must have the same dimension, $\dim(X) = d_4$.
	
	Collecting our work from the previous paragraphs, we have that 
	\[\begin{array}{r c l}
		\dim(\ker(AB)) &= &\dim(\ker(B)) + d_1\\
		\dim(\coker(AB)) &= &\dim(\coker(A)) + d_4\\
		\dim(\ker(A)) &= &d_1 + d_3\\
		\dim(\coker(B) &= &d_3 + d_4
	\end{array}.\]
	So we calculate $\ind(AB) = \dim(\ker(B)) + d_1 - \dim(\coker(A)) - d_4$. On the other hand, $\ind(A) + \ind(B) = \dim(\ker(A)) - \dim(\coker(A)) + \dim(\ker(B)) - \dim(\coker(B)) = d_1 + d_3 - \dim(\coker(A)) + \dim(\ker(B)) - d_3 + d_4$, which gives the desired equality.
	
	The proof of (2) follows from the fact that $\ker(I_\infty - R) = \{v \in V : v - R v = 0\} = \{v \in V : v = R v\} \simeq \coker(I_\infty - R)$. Because those subspaces have finite dimension, we calculate 
	\[0 = \ind(I_\infty - S) = \ind(A_0 A) = \ind(A_0) + \ind(A).\]
	
	To show that (3) holds, define $R' = (R -A_0T)$ and $S' = (S - T A_0)$, and note \[\begin{array}{r c l} A_0(A + T) &= &I_\infty - R + A_0 T = I_\infty - R'\\
		(A+T)A_0 &= &I_\infty - S + TA_0 = I_\infty - S'
	\end{array}.\] Thus $A+T$ is Fredholm. Finally,
	\[\ind(A_0) + \ind(A+T) = \ind(A_0(A+T)) = \ind(I_\infty - R') = 0.\] Since $\ind(A_0) = - \ind(A)$, we have that $\ind(A) = \ind(A+T)$.
\end{proof}

As a corollary we are able to show the following properties of the index.

\begin{cor}\label{conjugation}
	Let $U$ be an invertible matrix in $\rcfm K$, and let $A$ and $B$ be Fredholm matrices. Furthermore, let $B_0$ be a Fredholm inverse of $B$. Then the following properties hold.
	\begin{enumerate}
		\item $\ind(U) = 0$
		\item $\ind(A) = \ind(U^{-1} A U)$.
		\item $\ind(A) = \ind(B_0 A B)$.
		\item $\ind(A) = 0$ if and only if $A = V + T$ for some invertible $V \in \rcfm K$, and $T \in M_\infty(K)$
	\end{enumerate}
\end{cor}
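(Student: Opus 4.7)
The plan is to get parts (1)--(3) quickly from Proposition \ref{fredholm properties}, and then do the real work in part (4). For (1), since an invertible $U \in \rcfm K$ is a bijection on $V$, both $\ker(U)$ and $\coker(U)$ are trivial, so $\ind(U) = 0$ directly from the definition. For (2), I would simply apply Proposition \ref{fredholm properties}(1) twice together with (1): $\ind(U^{-1}AU) = \ind(U^{-1}) + \ind(A) + \ind(U) = \ind(A)$. For (3), the same additivity together with Proposition \ref{fredholm properties}(2) gives $\ind(B_0 A B) = \ind(B_0) + \ind(A) + \ind(B) = -\ind(B) + \ind(A) + \ind(B) = \ind(A)$.

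The forward direction of (4) is immediate: if $A = V + T$ with $V$ invertible in $\rcfm K$ and $T \in M_\infty(K)$, then Proposition \ref{fredholm properties}(3) gives $\ind(A) = \ind(V)$, which equals $0$ by (1). The reverse direction is the main obstacle and requires an explicit construction. Suppose $\ind(A) = 0$, so $\ker(A)$ and $\coker(A)$ have the same (finite) dimension $n$ by Lemma \ref{fredholm dimension}. Fix a basis $\{u_1, \ldots, u_n\}$ of $\ker(A)$ and vectors $\{w_1, \ldots, w_n\} \subseteq V$ whose cosets form a basis of $V/\im(A)$. Choose $N$ large enough that $\ker(A) \subseteq \text{span}(b_1, \ldots, b_N)$, let $W$ be a complement of $\ker(A)$ inside $\text{span}(b_1, \ldots, b_N)$, and set $V' = W \oplus V_{N+1}$, which is a complement of $\ker(A)$ in $V$ that contains the tail $V_{N+1}$. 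Define $T \in \text{End}(V)$ by $T(u_i) = w_i$ and $T \equiv 0$ on $V'$.

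I would then check three things about $T$ and $V := A + T$. First, $T$ has finite-dimensional image $\subseteq \text{span}(w_1, \ldots, w_n)$ and vanishes on $V_{N+1}$, so $T \in M_\infty(K)$. Second, a direct decomposition argument shows $V$ is bijective on the vector space $V$: any vector $v = v_0 + v'$ with $v_0 \in \ker(A)$, $v' \in V'$ satisfies $V(v) = A(v') + T(v_0)$, which is $0$ only when both summands vanish, forcing $v' = 0$ and $v_0 = 0$; surjectivity follows because $\im(V) \supseteq \im(A) + \text{span}(w_i) = V$. Third, and the only remaining delicate point, is that $V$ must be invertible \emph{in} $\rcfm K$, i.e., its inverse map must itself be row-and-column finite.

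For this last step, the idea is to bootstrap off the Fredholm inverse rather than argue directly. Since $A$ is Fredholm and $T \in M_\infty(K)$, Proposition \ref{fredholm properties}(3) gives that $V = A + T$ is Fredholm, so there exists $V_0 \in \rcfm K$ and $S \in M_\infty(K)$ with $V_0 V = I_\infty - S$. Because $V$ is bijective, we may multiply by $V^{-1}$ (as a set-function) to obtain $V^{-1} = V_0 + V^{-1} S$. The correction term $V^{-1} S$ has finite-dimensional image (since $S$ does) and is zero on $V_m$ for all sufficiently large $m$ (since $S$ vanishes there), hence lies in $M_\infty(K) \subseteq \rcfm K$. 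Therefore $V^{-1} \in \rcfm K$, completing the construction. The hard part throughout is keeping the ambient algebra honest: many natural constructions produce a bijective endomorphism whose inverse fails to be row-column finite, which is why the careful choice of $V' \supseteq V_{N+1}$ and the Fredholm-inverse argument for $V^{-1}$ are both essential.
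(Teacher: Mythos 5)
Your parts (1)--(3) and the forward direction of (4) match the paper's proof exactly. For the reverse direction of (4) your underlying construction is also the same as the paper's, just phrased differently: the paper splits $V \simeq \ker(f) \oplus V/\ker(f)$ and replaces $f$ by $\phi \oplus \bar f$, where $\bar f$ is the induced isomorphism onto $\im(f)$ and $\phi$ is any isomorphism $\ker(f) \to \coker(f)$; your $A + T$ is precisely this map, with $T$ carrying a basis of $\ker(A)$ to coset representatives of $\coker(A)$ and vanishing on a complement. Where you genuinely diverge is in the final verification. The paper checks directly from the defining condition of $\rcfm V$ that $\phi \oplus \bar f$ lands in $\rcfm V$, but never addresses whether its \emph{inverse} is row-and-column finite, which is what ``invertible in $\rcfm K$'' requires. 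Your bootstrap through the Fredholm inverse closes exactly that point and is the stronger argument; note only that from $V_0V = I_\infty - S$ one gets $V^{-1} = V_0 + SV^{-1}$, whose correction term is merely bounded column-finite, so you should instead start from the right-hand relation $VV_0 = I_\infty - S$ (which a two-sided Fredholm inverse provides) to obtain $V^{-1} = V_0 + V^{-1}S$, for which your argument that the correction term has finite-dimensional image and kills a tail $V_m$ does place it in $M_\infty(K)$. With that one-line repair the proof is complete, and arguably more careful than the published one.
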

\begin{proof}
	The first claim follows from the fact that if $U$ is invertible, its kernel and cokernel are trivial. The second follows from the first, and the third follows from Proposition \ref{fredholm properties}. 
	
	So all that is left to show is (4). Sufficiency follows directly from (1) and from Proposition \ref{fredholm properties}. This proof will use the definition of  $\rcfm V$ from the introduction:
	\[\rcfm V = \{f \in \text{End}(V) : \forall n \in \zplus, \exists m \in \zplus \text{ with } f(V_m) \subseteq V_n\}.\] Suppose that $f$ is a Fredholm endomorphism of index 0, that is, $f$ is an invertible linear transformation in $\rcfm V/M_\infty(V)$ for which $\dim(\ker(f)) = \dim(\im(f))$, and both are finite. Decompose the vector space $V$ two different ways: 
	\[V \simeq \ker(f) \oplus V/\ker(f) \simeq \coker(f) \oplus \im(f)\]
	Finally define isomorphisms $\bar f: V/\ker(f) \rightarrow \im(f)$ and $\phi: \ker(f) \rightarrow \coker(f)$.
	
	Then define a map $\phi \oplus \bar f: V \rightarrow V$. As it is the direct sum of two injective homomorphisms with image $\coker(f) \oplus \im(f) = V$, this is an automorphism of $V$. Now let $v \in V$ and decompose it as $v = v_0 + v_1$ where $v_0 \in \ker(f)$ and $v_1 \in V/\ker(f)$. Then consider
	\[\left[f - (\phi \oplus \bar f)\right](v) = f(v_0 + v_1) - \phi(v_0) -\bar f(v_1) = \phi(v_0) + (f - \bar f)(v_1) = \phi(v_0).\]
Thus these two homomorphisms differ only by some element from $\coker(\phi)$ which is finite dimensional. 

All that is left to show is that $\phi \oplus \bar f \in \rcfm V$. Since $f \in \rcfm V$, for any $n \in \zplus$ there is some $m \in \zplus$ such that $f(V_m) \subseteq V_n$. Then that same choice of $m$ will assure that $f\left(V_m/\ker(f)|_{V_m}\right) \subseteq V_n$. Thus $\bar f \in \rcfm V$. Furthermore, as the $V_i$ form a countable descending sequence of subspaces, this $m$ may be chosen large enough so that $V_m \cap \ker(f) = \{0\}$. Thus $\phi \oplus \bar f \in \rcfm K$, and $f$ is the sum of an invertible element of $\rcfm V$ and an element of $M_\infty(V)$. In terms of matrices, this implies that every Fredholm matrix of zero index is the sum of an invertible and a finite rank matrix.
\end{proof}

We are now in the position to begin the classification of trivial extensions of $K[x,x^{-1}]$ by $M_\infty(K)$.

\begin{lemma}
If $E$ is an extension of $K[x,x^{-1}]$ by $M_\infty(K)$ with invariant $\psi$ such that $\ind(\psi(x)) = 0 = \ind(\psi(x^{-1}))$, then $E$ is a trivial extension.
\end{lemma}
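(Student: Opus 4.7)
The plan is to construct an explicit section $\sigma \colon K[x,x^{-1}] \to \rcfm K$ with $\pi \circ \sigma = \psi$. Since $K[x,x^{-1}]$ is generated as a $K$-algebra by $x$ and $x^{-1}$ subject only to the relation $xx^{-1} = x^{-1}x = 1$, it suffices to produce a single invertible element $V \in \rcfm K$ satisfying $\pi(V) = \psi(x)$; then setting $\sigma(x) = V$ and $\sigma(x^{-1}) = V^{-1}$ will give a well-defined homomorphism.

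First, I would pick an arbitrary pre-image $A \in \rcfm K$ of $\psi(x)$ under the surjection $\pi$. Because $x$ is invertible in $K[x,x^{-1}]$ and $\psi$ is a unital homomorphism, $\psi(x)$ is invertible in $Q(K)$, so $A$ is Fredholm. The hypothesis $\ind(\psi(x)) = 0$ is interpreted as saying that every (equivalently, some) lift of $\psi(x)$ has index $0$, which is well-defined by Proposition \ref{fredholm properties}(3) since any two lifts differ by an element of $M_\infty(K)$.

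Second, I would invoke the substantive tool of the section, namely Corollary \ref{conjugation}(4), which says that a Fredholm matrix of index zero decomposes as $A = V + T$ with $V \in \rcfm K$ invertible and $T \in M_\infty(K)$. This $V$ is the desired lift, since $\pi(V) = \pi(A) - \pi(T) = \pi(A) = \psi(x)$. This is the only non-formal step in the argument.

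Finally, I would define $\sigma \colon K[x,x^{-1}] \to \rcfm K$ by $\sigma\bigl(\sum_n a_n x^n\bigr) = \sum_n a_n V^n$, which is a well-defined homomorphism because $V$ is a unit of $\rcfm K$. To check $\pi \circ \sigma = \psi$, observe that $\pi(\sigma(x^n)) = \pi(V)^n = \psi(x)^n = \psi(x^n)$ for every $n \in \Z$, and extend by linearity. This exhibits $\sigma$ as a splitting of $\pi|_E$ (after identifying $E$ with the pullback $\rcfm K \oplus_{Q(K)} K[x,x^{-1}]$ via the proposition preceding Theorem \ref{c1 c2 equivalent}), so $E$ is trivial. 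I note that the hypothesis on $\psi(x^{-1})$ is in fact redundant: by Proposition \ref{fredholm properties}(1) applied to $\psi(x)\psi(x^{-1}) = I$, one has $\ind(\psi(x^{-1})) = -\ind(\psi(x))$, so the vanishing of one index forces the vanishing of the other.
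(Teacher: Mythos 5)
Your proof is correct and follows essentially the same route as the paper: both lift $\psi(x)$ to a Fredholm matrix of index zero, apply Corollary \ref{conjugation}(4) to split off an invertible $V \in \rcfm K$, and define the section by $x \mapsto V$, $x^{-1} \mapsto V^{-1}$ (the paper verifies by hand that the lift of $\psi(x^{-1})$ differs from $V^{-1}$ by an element of $M_\infty(K)$, which your check $\pi(V^{-1}) = \psi(x)^{-1} = \psi(x^{-1})$ accomplishes more directly). Your observation that the hypothesis $\ind(\psi(x^{-1})) = 0$ is redundant, via $\ind(\psi(x)) + \ind(\psi(x^{-1})) = \ind(\psi(1)) = 0$, is a correct minor improvement not noted in the paper.
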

\begin{proof}
	We need to show that there is an injective homomorphism\linebreak $\sigma: K[x,x^{-1}] \to \rcfm K$ such that $\pi\circ \sigma = \psi$. Write $\psi(x) = T + m$ and $\psi(x^{-1}) = T' + m'$ for invertible matrices $T$ and $T'$ and $m,m' \in M_\infty(K)$. Also note that $\psi(x) \psi(x^{-1}) = \psi(1) = I_\infty + n$ for some $n \in M_\infty(K)$. Thus
	\[(T+m)(T'+ m') = TT' + M = I_\infty + n\] for $M = mT' + Tm' + mm' \in M_\infty(K)$. Now let $T^{-1}$ be the inverse of $T$, and calculate $T(T' - T^{-1}) = n - M$, which then shows $T' - T^{-1} = T^{-1}(n-M) \in M_\infty(K)$. Thus $\psi(x^{-1}) = T^{-1} + m''$.
	
	Define $\sigma: K[x,x^{-1}] \to \rcfm K$ by $x \mapsto T$ and $x^{-1} \mapsto T^{-1}$ and extend linearly. Thus $E$ is a trivial extension.
\end{proof}

We thus have part of the following theorem.

\begin{thm}\label{trivial extensions}
	An extension $E$ of $K[x,x^{-1}]$ by $M_\infty(K)$ is trivial if and only if $\ind(\psi(x)) = 0 $.
\end{thm}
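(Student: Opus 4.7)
The statement packages a biconditional; one direction is essentially the preceding lemma, but needs a small bridge, while the other direction is a direct consequence of the fact that the splitting of the sequence produces an honest invertible element of $\rcfm K$ lifting $\psi(x)$.

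First I would address a minor notational point: the index is defined on matrices in $\rcfm K$, whereas $\psi(x)$ lives in $Q(K)$. By Proposition \ref{fredholm properties}(3), if $A$ is a Fredholm matrix and $T \in M_\infty(K)$, then $\ind(A+T) = \ind(A)$, so the index descends to a well-defined function on the invertible elements of $Q(K)$ (using any lift). I would mention this explicitly so that the symbol $\ind(\psi(x))$ is unambiguous.

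For the sufficiency direction, the preceding lemma already gives triviality whenever both $\ind(\psi(x)) = 0$ and $\ind(\psi(x^{-1})) = 0$. The plan is therefore to show that the single hypothesis $\ind(\psi(x)) = 0$ forces $\ind(\psi(x^{-1})) = 0$ as well. Choose any lifts of $\psi(x)$ and $\psi(x^{-1})$ in $\rcfm K$; their product lifts $\psi(1)$ and hence equals $I_\infty + m$ for some $m \in M_\infty(K)$. By Proposition \ref{fredholm properties}(1) and (3), together with Corollary \ref{conjugation}(1), we get $\ind(\psi(x)) + \ind(\psi(x^{-1})) = \ind(I_\infty + m) = \ind(I_\infty) = 0$. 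The lemma now applies.

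For the necessity direction, suppose $E$ is trivial, so there is an injective (unital) homomorphism $\sigma: K[x,x^{-1}] \to \rcfm K$ with $\pi \circ \sigma = \psi$. Since $x$ is invertible in $K[x,x^{-1}]$ with inverse $x^{-1}$, and $\sigma$ is a unital algebra homomorphism, $\sigma(x)$ is genuinely invertible in $\rcfm K$ with inverse $\sigma(x^{-1})$. Corollary \ref{conjugation}(1) gives $\ind(\sigma(x)) = 0$, and because $\sigma(x)$ is a lift of $\psi(x)$ the well-definedness noted above yields $\ind(\psi(x)) = 0$.

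I do not anticipate any serious obstacle; the main thing to get right is the bookkeeping between $\rcfm K$ and $Q(K)$ so that $\ind(\psi(x))$ has an unambiguous meaning, and the observation that the two indices $\ind(\psi(x))$ and $\ind(\psi(x^{-1}))$ are forced to be negatives of each other (hence simultaneously zero). With those two points in hand, both directions are one-line arguments built on the previous lemma and on Corollary \ref{conjugation}(1).
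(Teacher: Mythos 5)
Your proposal is correct and follows essentially the same route as the paper: sufficiency reduces to the preceding lemma, and necessity follows because a splitting $\sigma$ sends the unit $x$ to an honest invertible element of $\rcfm K$, which has index $0$. The one thing you add that the paper leaves implicit is the bridge showing $\ind(\psi(x)) = 0$ forces $\ind(\psi(x^{-1})) = 0$ (via additivity of the index and $\ind(I_\infty + m) = 0$), which is a worthwhile clarification since the lemma as stated requires both indices to vanish.
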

\begin{proof}
	We must only show necessity, to that end, suppose that $E$ is trivial. Then there is some homomorphism $\sigma$ as above such that $\sigma(x)$ and $\sigma(x^{-1})$ are units in $\rcfm K$. Thus $\ind(\pi \circ \sigma(x)) = \ind(\psi(x)) = 0$.
\end{proof}

As a corollary we have established the following result by Alahmedi et. al. 
\begin{cor}{(\cite{structurelpapoly}, Theorem 2)}
	The Toeplitz-Jacobson algebra is not a trivial extension.
\end{cor}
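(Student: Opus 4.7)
The plan is to apply Theorem \ref{trivial extensions} directly to the natural extension structure on $\mathcal T$ by identifying the invariant $\psi: K[x,x^{-1}] \to Q(K)$ arising from the Jacobson embedding recalled in the introduction, and then verifying that $\ind(\psi(x)) \neq 0$. By Theorem \ref{trivial extensions} this rules out triviality, which is exactly the claim.

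First I would invoke the Jacobson embedding $\mathcal T \hookrightarrow \rcfm K$ sending $x \mapsto S_{-1}$ and $y \mapsto S_1$. By the uniqueness clause of Lemma \ref{faithful embed}, this embedding coincides with the map $\phi$ of Construction 1, so the invariant of the Toeplitz-Jacobson extension satisfies $\psi(x) = \pi(S_{-1})$, where $\pi: \rcfm K \to Q(K)$ is the natural surjection. By Proposition \ref{fredholm properties}(3), the index of a Fredholm preimage in $\rcfm K$ depends only on its class in $Q(K)$, so it is legitimate to compute $\ind(\psi(x))$ via $\ind(S_{-1})$.

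Next I would record the index computation already tabulated immediately after the definition of the index: since $S_{-1}$ acts on $V$ by $b_1 \mapsto 0$ and $b_j \mapsto b_{j-1}$ for $j \geq 2$, one has $\ker(S_{-1}) = Kb_1$ and $\im(S_{-1}) = V$, so $\ind(S_{-1}) = 1 - 0 = 1$. As $1 \neq 0$, Theorem \ref{trivial extensions} applied to the Toeplitz-Jacobson extension forces $\mathcal T$ to be non-trivial, which is the desired conclusion. There is no real obstacle here: all the substantive work has been done in Theorem \ref{trivial extensions} and in the index calculations for the shift matrices, and this corollary is simply the reading of those facts off the Jacobson presentation.
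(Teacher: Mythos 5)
Your proposal is correct and follows the same route as the paper: identify the invariant via the Jacobson embedding $x \mapsto S_{-1}$, compute $\ind(S_{-1}) = 1 \neq 0$, and conclude by Theorem \ref{trivial extensions}. The extra detail you supply (the uniqueness clause of Lemma \ref{faithful embed} and the explicit kernel/cokernel computation for $S_{-1}$) only makes explicit what the paper leaves implicit.
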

\begin{proof}
	Consider the Jacobson embedding of the Toeplitz-Jacobson algebra where $x \mapsto S_{-1}$, $y \mapsto S_1$. Since $\ind(S_{-1}) = 1 \neq 0$ the extension cannot be trivial.
\end{proof}

\begin{rmk}
Fredholm elements have previously been defined in \cite{liftingunits} by Perera and explored in the context of the algebraic $K$-theory of rings to identify obstructions to unit lifting modulo exchange ideals. Specifically, given an exchange ideal $I$ of a ring $R$, the set of Fredholm elements is defined as above, i.e. $\pi^{-1}(GL(R/I))$ where $GL(R/I)$ denotes the set of invertible elements in $R/I$. Then the extension $0 \to I \to R \to R/I \to 0$ gives rise to an exact sequence of $K$-groups
\[
\begin{tikzcd}
K_1(R) \arrow{r} &K_1(R/I) \arrow{r}{\delta} &K_0(I) \arrow{r} &K_0(R) \arrow{r} &K_0(R/I)
\end{tikzcd}
\]
where $\delta$ is a connecting map between the two $K$-groups (see \cite{rosenberg} for more information on the specific construction of $\delta$). The index is then defined as $\text{index}(x) = \delta(\pi(x))$ for any Fredholm element $x$; futhermore, this index map retains many of the properties of the index from the present article. This led Perera to a unit-lifting result which is very similar in style to Corollary \ref{conjugation}(4) above. While Perera's approach is more generally applicable ($M_\infty(K)$ is an exchange ideal of $\rcfm K$), the approach in this article uses linear algebraic techniques afforded by the algebra structure to show the unit-lifting property. These techniques then lend themselves towards explicit constructions of algebras, as we will see in the next section.
\end{rmk}

\section{A Menagerie of Extensions} \label{menagerie}
In this section, we will use the language of extensions to produce an infinite family of non-isomorphic matrix algebras which have faithful ideal $M_\infty(K)$ and quotient isomorphic to $K[x,x^{-1}]$.

\begin{ex}
	Fix the upper left corner embedding of $M_\infty(K)$ into $\rcfm K$, and then for each $n > 0$ define the family of maps $\psi_n: K[x,x^{-1}] \rightarrow Q(K)$ by $\psi_n(x) = S_{-n}$ and $\psi_n(x^{-1}) = S_n$. The pullback along $\psi_n$ and $\pi$ gives algebras of the form 
	
	\[\mathcal T_n = M_\infty(K) + \text{Span}_K\{S_{in} : i \in \Z\}.\]
	The case when $n = 1$ is the Jacobson embedding and is isomorphic to the Toeplitz-Jacobson algebra, $\mathcal T$. 
\end{ex}
\begin{claim}
	The algebras $T_n$ are pairwise non-isomorphic.
\end{claim}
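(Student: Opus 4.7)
The plan is to show that $M_\infty(K)$ is the left socle of each $\mathcal T_n$, so that any algebra isomorphism $\Phi : \mathcal T_n \to \mathcal T_m$ must send $M_\infty(K)$ to $M_\infty(K)$ and therefore restrict to an automorphism of $M_\infty(K)$. Proposition \ref{automorphism is inner} combined with the argument in the first half of the proof of Theorem \ref{equivalentinvariants} will then produce an invertible $U \in \rcfm K$ such that $\Phi(a) = U^{-1} a U$ for every $a \in \mathcal T_n$. Comparing the index of $\Phi(S_{-n})$ computed two different ways will force $n = m$.

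The first task is the socle computation. For each $j \in \zplus$ one checks that $\mathcal T_n e_{jj} = M_\infty(K) e_{jj}$, since every $S_{in} e_{jj}$ is either zero or a single matrix unit already lying in $M_\infty(K) e_{jj}$; this left ideal is minimal in $\mathcal T_n$ because multiplying any nonzero element on the left by a suitable matrix unit recovers the entire column. Hence $M_\infty(K)$ lies in the left socle of $\mathcal T_n$. Conversely, if $L$ is a minimal left ideal of $\mathcal T_n$ with $L \cap M_\infty(K) = 0$, then the quotient map would embed $L$ as a nonzero $\mathcal T_n$-submodule of $\mathcal T_n / M_\infty(K) \cong K[x,x^{-1}]$, i.e.\ a nonzero ideal of $K[x,x^{-1}]$. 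Since $K[x,x^{-1}]$ is a PID with no minimal nonzero ideals, no such $L$ exists, and the left socle of $\mathcal T_n$ equals $M_\infty(K)$.

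Given an isomorphism $\Phi : \mathcal T_n \to \mathcal T_m$, the socle description forces $\Phi(M_\infty(K)) = M_\infty(K)$, and the argument in the proof of Theorem \ref{equivalentinvariants} carries over to produce an invertible $U \in \rcfm K$ with $\Phi(a) = U^{-1} a U$ for all $a \in \mathcal T_n$. Passing to the quotient, $\Phi$ induces a $K$-algebra automorphism $\bar\Phi$ of $K[x,x^{-1}]$; every such automorphism sends $x$ to $c x^{\pm 1}$ for some nonzero $c \in K$, so $\Phi(S_{-n}) = c S_{\mp m} + T$ for some $T \in M_\infty(K)$ and some choice of sign. Corollary \ref{conjugation}(2) gives $\ind(\Phi(S_{-n})) = \ind(S_{-n}) = n$, while Proposition \ref{fredholm properties}(3) together with the scalar-invariance of the index gives $\ind(\Phi(S_{-n})) = \ind(S_{\mp m}) = \pm m$; thus $n = \pm m$, and positivity forces $n = m$. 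I expect the socle identification to be the main obstacle, since one has to rule out ``hidden'' minimal left ideals coming from the Laurent-polynomial quotient, and this is exactly where the PID structure of $K[x,x^{-1}]$ does the essential work.
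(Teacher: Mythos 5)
Your proof is correct, and it reaches the same numerical contradiction as the paper --- comparing $\ind(\Phi(S_{-n}))$ computed two ways --- but the route you take to justify that comparison is genuinely different and, in fact, more complete. The paper's proof simply asserts that $\Phi(S_{-n})$ must equal $S_{im}+k$ and that ``the dimension of the kernel and cokernel of a linear transformation are preserved under isomorphism,'' leaving unaddressed both why an abstract algebra isomorphism $\mathcal T_n \to \mathcal T_m$ must carry $M_\infty(K)$ onto $M_\infty(K)$ and why it should preserve the index of a matrix at all. You supply both missing links: the identification of $M_\infty(K)$ as the left socle (so that any isomorphism preserves it, the PID argument correctly ruling out minimal left ideals hiding in the Laurent quotient), and the upgrade of $\Phi$ to conjugation by an invertible $U \in \rcfm K$ via Proposition \ref{automorphism is inner} and the argument of Theorem \ref{equivalentinvariants}, after which Corollary \ref{conjugation}(2) legitimately gives index-invariance. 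Your use of the automorphism group of $K[x,x^{-1}]$ (forcing $x \mapsto cx^{\pm1}$) is also sharper than the paper's, which only uses that units map to units and hence must rule out all multiples $n = -im$ rather than just $n = \pm m$. One trivial point to tidy: in the socle argument you should begin with a minimal left ideal $L \not\subseteq M_\infty(K)$ and observe that minimality forces $L \cap M_\infty(K) = 0$ before embedding $L$ into the quotient; as written you assume the trivial intersection outright. The cost of your approach is length; what it buys is a proof that actually closes the gaps the paper leaves open.
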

\begin{proof}
	Suppose there was an isomorphism $\phi$ between the algebras $T_n$ and $T_m$; without loss of generality choose the isomorphism so that $n < m$. Consider $\phi(S_{-n})$. Because $S_{-n}$ is Fredholm, its image under $\phi$ must also be Fredholm. Furthermore, $\pi(\mathcal T_n) \simeq K[x,x^{-1}]$, and the only invertible elements of $K[x,x^{-1}]$ are the monomials, $\phi(S_{-n}) = S_{im} + k$ for some $i \in \Z$ and $k \in M_\infty(K)$. Since the dimension of the kernel and cokernel of a linear transformation are preserved under isomorphism, the index of a matrix must also be preserved. Because the index is invariant under perturbation by an element $M_\infty(K)$, the following equality holds: 
	\[n = \ind(\phi(S_{-n})) = \ind(S_{im} + k) = -im.\] Thus $n = m \cdot (-i)$, and it follows that $n = m$ since $n < m$.
\end{proof}

\begin{ex}
	Using Construction 2, and the upper left corner embedding of $M_\infty(K)$ into $\rcfm K$ we will construct a trivial extension of $K[x,x^{-1}]$ by $M_\infty(K)$. Define the map $\psi_0: K[x,x^{-1}] \rightarrow Q(K)$ by \[\psi_0(x^n) = \overline{D_2^n} := \overline{\diag(1, 2, \ldots, 2^{i-1}\ldots)}^n = \overline{\diag(1, 2^n, \ldots, 2^{n(i-1)}, \ldots)}\] for any $n \in \Z$. Extend linearly to define a map into $K[x,x^{-1}]$. Note that $\ind(\psi_0(x)) = 0$.
	
	\begin{claim}
		The set $D = \{D_2^n : n \in \Z\}$ is linearly independent modulo $M_\infty(K)$
	\end{claim}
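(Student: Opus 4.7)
The plan is to reduce the statement to the fact that a nonzero Laurent polynomial over a field can have only finitely many roots. Suppose we have a finite subset $F \subset \Z$ and scalars $\{c_n : n \in F\} \subseteq K$ such that
\[\sum_{n \in F} c_n D_2^n \in M_\infty(K).\]
First I would observe that each $D_2^n$ is diagonal, so the finite sum above is the diagonal matrix whose $(i,i)$-entry equals $\sum_{n \in F} c_n 2^{n(i-1)}$. Since membership in $M_\infty(K)$ forces all but finitely many diagonal entries to vanish, we obtain
\[\sum_{n \in F} c_n 2^{n(i-1)} = 0 \quad \text{for all but finitely many } i \in \zplus.\]

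Next I would convert this condition into a polynomial identity. Let $N = -\min F$ (so $N \geq 0$) and set $y_i = 2^{i-1}$; multiplying the vanishing identity by $y_i^{N}$ yields
\[p(y_i) := \sum_{n \in F} c_n \, y_i^{n+N} = 0\]
for infinitely many $i$. Here $p(y) = \sum_{n \in F} c_n y^{n+N} \in K[y]$ is an honest polynomial in $y$ (all exponents are now nonnegative integers). Because the values $y_i = 2^{i-1}$ are pairwise distinct for distinct $i \in \zplus$ (here the characteristic zero hypothesis is used to ensure that the powers of $2$ are distinct in $K$), the polynomial $p$ has infinitely many roots, hence $p = 0$ as an element of $K[y]$.

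Finally, from $p = 0$ I would conclude that each coefficient $c_n = 0$, since the exponents $n + N$ for $n \in F$ are pairwise distinct nonnegative integers. This gives linear independence of $D$ modulo $M_\infty(K)$. I don't anticipate any real obstacle: the only subtle point is the initial reduction to a polynomial identity (clearing negative exponents by multiplying by $y_i^N$), and the use of characteristic zero to guarantee that the substitution points $2^{i-1}$ remain distinct.
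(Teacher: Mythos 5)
Your proof is correct and follows essentially the same route as the paper: both arguments reduce the claim to the vanishing of $\sum_n c_n 2^{n(i-1)}$ for all sufficiently large $i$ and then exploit the distinctness of the powers of $2$ in characteristic zero. The only difference is in the last step, where the paper extracts an explicit $m\times m$ Vandermonde system from $m$ consecutive rows and invokes its invertibility, while you clear negative exponents and invoke the fact that a nonzero polynomial over a field has finitely many roots; these are interchangeable forms of the same linear-independence fact.
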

	\begin{proof}
		Suppose that there is a finite set of $m$ nonzero coefficients $\{k_{n_i} : 1 \leq i \leq m \text{ and } n_i \in \Z\}$ where $n_1 < n_1 < \cdots < n_m$ and $\sum_{i} k_{n_i} D_2^{n_i} = M$ for some matrix $M \in M_\infty(K)$. Choose $j \in \zplus$ such that $M$ may be partitioned as follows:
		\[\left(\begin{array}{c | c}
			M' &0\\ \hline
			0 &0
		\end{array}\right)\] where $M' \in M_j(K)$. Ignoring the first $j$ rows of the matrix equation, one has the infinite system of $m$ linear equations over $K$
		\[\begin{array}{c c l}
			\sum_i k_{n_i} 2^{n(j)} &= &0\\
			\sum_i k_{n_i} 2^{n(j+1)} &= &0\\
			\sum_i k_{n_i} 2^{n(j+2)} &= &0\\
			\vdots\\
			\sum_i k_{n_i} 2^{n(j+m-1)} &= &0
		\end{array}\] 
		This then translates to a matrix equation $A v = 0$ where
		\[A = \begin{pmatrix}
			2^{n_1(j)} &2^{n_2 (j)} &\cdots &2^{n_m (j)}\\
			2^{n_1 (j+1)} &2^{n_2 (j+1)} &\cdots &2^{n_m(j+1)}\\
			\vdots &&&\vdots\\
			2^{n_1(j+m - 1)} &2^{n_2(j+m - 1)} &\cdots &2^{n_m(j+m - 1)}
		\end{pmatrix} \text{ and } v = \begin{pmatrix}
			k_{n_1} \\ k_{n_2}\\ \vdots \\k_{n_m}
		\end{pmatrix}\]
		The matrix $A$ is clearly Vandermonde, thus invertible. So $k_{n_i} = 0$ for all $1 \leq i \leq m$. 
	\end{proof}
	
	Because these images of $\psi_0$ are linearly independent in $Q(K)$ one may use Construction 2 to take the pullback of $Q(K)$ along $\pi$ and $\psi_0$ to form an extension $\mathcal T_0$ of $K[x,x^{-1}]$ by $M_\infty(K)$. The linear independence of $\{D_2^n : n \in \Z\}$ modulo $Q(K)$ certainly assures that $\psi$ is injective. Then as a consequence of Lemma \ref{faithful embed} and Lemma \ref{faithfulideal} $T_0$ may be considered to be a subalgebra of $\rcfm K$, and in addition $M_\infty(K)$ is a faithful ideal of $\mathcal T_0$. The extension $\mathcal T_0$ can then be seen to be of the form
	\[\mathcal{T}_0 = \text{Span}_K\left\{D_2^n : n \in \Z \right\} + M_\infty(K) \subseteq \rcfm K.\] 
	
	As noted previously, this extension $\mathcal T_0$ occupies a separate equivalence class of extensions than those from the previous example. In fact, this extension is not even isomorphic to the previous extensions. One can check this by examining of the the multiplication action of $\psi_0(x^n)$ on the set of matrix units $\{e_{ij} : i, j \in \zplus\}$. Multiplication merely scales the matrix units instead of shifting them as in the previous example.
\end{ex}

In the  previous two examples we have constructed an infinite family of extensions $\mathfrak T = \{\mathcal T_i : i \geq 0\}$ of $K[x,x^{-1}]$ by $M_\infty(K)$; the subscript of each extension corresponds to the index of the image of $x$ under $\psi_i$. We pose the following question:
\begin{question}
	Is $\mathfrak T$ a complete (up to equivalence) family of faithful extensions of $K[x,x^{-1}]$ by $M_\infty(K)$?
\end{question}

It is evident from our work in Section \ref{trivial} that two equivalent extensions must have the same index, but as of right now, the converse is unknown. A starting place would be a resolution of the following.

\begin{question}
	Are all trivial extensions of $K[x,x^{-1}]$ by $M_\infty(K)$ equivalent?
\end{question}

\section*{Acknowledgements}
I would like to thank Manfred Dugas and Daniel Herden for their close reading of many recent drafts of this article. Thanks also the the anonymous referee for their thorough treatment of this article.

\end{document}